\documentclass{amsart}
\usepackage[utf8]{inputenc}
\usepackage{amssymb}
\usepackage{amsmath}
\usepackage{color}
\usepackage{comment}
\usepackage[dvipsnames]{xcolor}
\usepackage[english]{babel}
\selectlanguage{french}
\usepackage[makeroom]{cancel}
\usepackage{eucal}
\usepackage{todonotes}
\newtheorem{proposition}{Proposition}

\newtheorem{definition}{Definition}
\newtheorem{theorem}{Theorem}
\newtheorem{example}{Example}
\newtheorem{lemma}{Lemma}
\newtheorem{remark}{Remark}

\newtheorem{conjecture}{Conjecture}
\DeclareMathOperator{\diag}{Diag}

\newcommand{\R}{{\mathbb{R}}}

%\newcommand{diag}{Diag}

%\title[]{A Nonuniform Nonautonomous Approach to the Jacobian Conjecture}
%\title[]{Global problems on nonuniform asymptotic stability and injectivity  }
\title[]{An application of a nonuniform global stability problem to the study of  parametrized polynomial automorphisms }
\author[]{\'Alvaro Casta\~neda}
\author[]{Ignacio Huerta}
\author[]{Gonzalo Robledo}

\address{Universidad de Chile, Departamento de Matem\'aticas. Casilla 653, Santiago, Chile}

\email{castaneda@uchile.cl, grobledo@uchile.cl }

\address{Departamento de Matem\'atica, Universidad T\'ecnica Federico Santa Mar\'ia, Casilla 110-V, Valpara\'iso, Chile.}

\email{ignacio.huertan@usm.cl}
\subjclass[2020]{14R15, 37C60, 34D09}
\keywords{Jacobian Conjecture; Nonuniform Hyperbolicity; Nonuniform Asymptotic Stability; Markus--Yamabe Problem}
\thanks{The first author was funded by FONDECYT Regular Grant 1240361. The second author was funded by FONDECYT POSTDOCTORAL Grant 3210132. The third author was funded by FONDECYT Regular Grant 1210733}
\date{january 2025}

\begin{document}

\begin{abstract} We propose a handful of definitions of injectivity for a  parametrized family of maps and study its link with a  global nonuniform stability conjecture for nonautonomous differential systems, which has been recently introduced. In a way similar to the Jacobian Conjecture, this relation lets us look at a certain group of parametrized polynomial automorphisms and show that they have polynomial inverses for certain parameters.
\end{abstract}

\maketitle

\section{Introduction}

The Jacobian Conjecture states that if a polynomial function $P\colon \mathbb{K}^{n}\to \mathbb{K}^{n}$; where $\mathbb{K}$ is a field with characteristic zero; has a constant and non-null Jacobian determinant, then $P$ has a polynomial inverse.
This conjecture was introduced by O.H. Keller in 1939 \cite{Keller} and is still open even in dimension two. Furthermore, decades of work on this conjecture have revealed that it is associated with a myriad of conjectures and problems in many areas of
mathematics. In particular, we will be interested in its links with a version of a nonautonomous global stability problem for a family of nonlinear systems 
\begin{equation}
\dot{x}=f(t,x),
\end{equation}
whose properties will be described later.

The relation between the Jacobian Conjecture and global stability problems started with
the Markus--Yamabe Conjecture (\textbf{MYC}), which is a global stability problem for nonlinear systems
of autonomous differential equations
\begin{equation}
\label{sistema}
\dot{x}=f(x),
\end{equation}
where $f\colon \mathbb{R}^{n}\to \mathbb{R}^{n}$ is of class $C^{1}$ and $f(0)=0$. \textbf{MYC} states that if $f$ is a \textit{Hurwitz vector field}, that is, the eigenvalues of the Jacobian matrix of $f$ have a negative real part in any $x\in \mathbb{R}^{n}$, or equivalently $Jf(x)$ is a Hurwitz matrix for any $x$, then the origin is globally asymptotically stable. We refer to \cite{MY} for details.

In order to describe the relation between the Jacobian Conjecture and \textbf{MYC}, it is useful to recall
a remarkable result of A. Van den Essen \cite[p.177]{vE} which states that if the \textbf{MYC} would be true, then the vector field
associated with the system (\ref{sistema}) is injective. On the other hand, G. Fournier and M. Martelli (see \cite[p.175]{vE}
and \cite{Martelli} for details) proved that if \textbf{MYC} would be true for polynomial vector fields $f(x)=x+H(x)$
in any dimension, where the polynomial $H$ is homogeneous with degree $\leq 3$ and $JH$ is nilpotent, then $f(x)$ has a polynomial inverse and the Jacobian Conjecture would be true. 

The \textbf{MYC} and its consequences have been an active topic of research. The case $n=1$ is trivial and the planar case was independently verified by C. Gutierrez \cite{Gu}, R. Fe\ss ler \cite{F} and A. A. Glutsyuk \cite{Glu}. However, this conjecture was proved to be false for $n\geq 3$ by A. Cima \textit{et al.} in \cite{CEGMH}.

Despite the above negative results for the \textbf{MYC}, the underlying problem of global stability has still been studied from various perspectives in the autonomous framework: the case of continuous and discontinuous piecewise (see \cite{Llibre1, Llibre2, ZhangYang}) and infinite-dimensional dynamical systems \cite{Rodrigues}. The \textbf{MYC} has also been studied from a nonautonomous perspective: a first approach was carried out in terms of cocycles \cite{Cheban}. Secondly, the authors proposed a nonautonomous version of the conjecture from a dichotomic point of view \cite{CHR}, where we introduce the Nonuniform Markus--Yamabe Conjeture (\textbf{NUMYC}). In this work, we will be focused on a particular case of this conjecture, which we will refer to as Bounded Nonuniform Nonautonomous Markus--Yamabe Conjeture (\textbf{B-NUMYC}) and more details will be given later (see Section 2).

\subsection{Injectiveness of a parametrized family of vector fields} In spite that the approach of Fournier and Martelli has a basis problem since \textbf{MYC} is not true,
the general idea is a nice example about how to address a conjecture by proving  its equivalence or its implicance to another one. In this context, the goal of this article will be to enquire about the
following problem: If we assume that  \textbf{B-NUMYC} for nonautonomous systems  $\dot{x}=f(t,x)$
is true, what can be deduced about the injectiveness of the family of maps $x\mapsto F_{t}(x):=f(t,x)$ parametrized by $t\geq 0$?.

 Note that there exist several ways to define injectivity for a family of maps parametrized by $t$ as $x\mapsto F_{t}(x)=f(t,x)$. In this article, we will propose  a set of notions: \textit{partial injectivity}, \textit{pseudo partial injectivity},  \textit{eventual injectivity} and \textit{pseudo eventual injectivity}. Roughly speaking, the family of maps $F_{t}$ is partially injective if $F_{\tilde{t}}$ is injective for a set of parameters $\tilde{t}$, and any partially injective family is also pseudo partially injective. In addition, our first result (Theorem \ref{inyectividad}) proves
that if \textbf{B-NUMYC} is true for any dimension, then $\{F_{t}(\cdot)\}_{t}$ is partially injective. 

\subsection{An application to the study of polynomial automorphisms} Our last result (Theorem \ref{FT}) is concerned with systems 
$\dot{x} = f(t,x)=x+H(t,x)$, where the coordinates of $x\mapsto H(t,x)$
are homogeneous polynomials of degree $3$ for any $t\geq 0$, while the Jacobian $JH(t,x)$ is nilpotent for any $t\geq 0$ and verifies a smallness condition for bigger values of $t$. We prove that if \textbf{B-NUMYC} is verified for this class of systems in any dimension, then Theorem \ref{inyectividad} implies that the family $\{f(t,\cdot)\}_{t\geq 0}$ will be partially injective and, consequently, the Jacobian Conjecture is satisfied for a subset of $\{f(t,\cdot)\}_{t\geq 0}$. 

We emphasize that for any fixed $t$, the maps $x\mapsto H(t,x)$ are the key tool of the reduction theorems obtained by H. Bass {\it{et al.}} \cite{BCW} and A.V. Yagzhev \cite{Y}. These results establish that, when addressing the Jacobian Conjecture, it is sufficient to verify it on those maps, for any dimension $n \geq 1$.

Theorem \ref{FT} can be seen in the spirit of Fournier and Martelli approach because seek to address a conjecture by proving another one. However, our result is partial, since we only proved that Jacobian Conjecture is verified for a 
subset of parameters $t$.

\subsection*{Structure of the article}  Section 2 provides a concise review about the topics necessary to state
the \textbf{B--NUMYC}: i) the nonuniform asymptotic stability for nonautonomous systems $\dot{x}=f(t,x)$ having an equilibrium at the origin, ii) the nonuniform exponential dichotomy, which allows to address the linear stability, and iii) a spectral theory related to the nonuniform exponential dichotomy. The \textbf{B-NUMYC} encompasses all these tools and is formally stated in terms of the above mentioned spectrum and nonuniform stability. 

Section 3 introduces several notions of injectivity for a family of maps $x\mapsto f(t,x)$ parametrized by $t$, namely, partial injectivity, pseudo partial injectivity, eventual injectivity and pseudo eventual injectivity. These properties are stated in terms of quantifiers
and the formal differences between them are quite subtle, which prompt us to provide illustrative examples shedding light about its particularities. The main result of this section is that if a differential system $\dot{x}=f(t,x)$ satisfies the  \textbf{B-NUMYC}, then its related family of maps parametrized by $t$ is partially injective. 

Section 4 is an application of the previous results to the study of a family of polynomial automorphisms. In fact, we show a real vector field by satisfying the conditions and conclusion of the \textbf{B-NUMYC}, whose corresponding complex parametrized vector field has explicit polynomial inverse for some parameters.

\subsection*{Notations} In this paper $M_{n}(\mathbb{R})$ is the set of $n\times n$ matrices with coefficients in $\mathbb{R}$, $I_{n}$ is the identity
matrix and we will use $\diag\{\lambda\}$ to denote $\lambda\, I_{n}$, that is, the diagonal matrix with terms $\lambda$. The matrix norm induced by the Euclidean vector norm $|\cdot|$ will be denoted by $\|\cdot\|$. Finally, we consider $\mathbb{R}^{+}=[0,+\infty)$.

\section{Nonuniform Nonautonomous Markus--Yamabe Conjecture}
This section is focused on the bounded case of the nonautonomous nonuniform Markus--Yamabe Conjecture, which was introduced recently in \cite{CHR}. We also recall that the uniform version of this conjecture was presented in \cite{CRMY}.

The above mentioned 
conjectures are global stability problems for nonautonomous differential equations $\dot{x}=f(t,x)$, which mimic
the classical \textbf{MYC} by considering a dichotomy spectrum instead of the eigenvalues spectrum: the \textit{uniform exponential dichotomy spectrum} is considered in \cite{CRMY} while the \textit{nonuniform exponential dichotomy spectrum} is treated in \cite{CHR}. On the other hand, each dichotomy spectrum arise from a different type of global stability

Despite the formal similarity in the statement of both conjectures, we emphasize that the lack of uniformity induces additional difficulties that cannot be addressed similarly as in the uniform case, prompting new research lines.

\begin{comment}
In order to recall the nonautonomous nonuniform Markus--Yamabe Conjecture, it will be necessary to revisit the property of \textit{nonuniform exponential dichotomy} \textbf{(NUED)} for linear nonautonomous differential systems and its corresponding spectrum. Moreover, we will recall the property of \textit{global nonuniform asymptotic stability} \textbf{(GNUAS)} for nonlinear differential systems and its relation with \textbf{NUED}.
\end{comment}

\begin{comment}
This new conjecture has the same structure that the previously mentioned reference but it will be fashioned along the nonuniform exponential dichotomy (NUED) instead the uniform one, namely, the problem 
will be stated in terms of a spectral theory arising from NUED, which allow an alternative characterization of the global nonuniform asymptotic stability for linear systems. For this purpose, we will recall the property of GNUAS and its relation with the NUED. 
\end{comment}

\subsection{Nonuniform asymptotic stability of nonlinear systems} Let us consider the nonlinear system
\begin{equation}
\label{referencial}
\dot{x}=g(t,x),
\end{equation}
where $g\colon \mathbb{R}^{+}\times \mathbb{R}^{n}\to \mathbb{R}^{n}$ is such that the existence, uniqueness and unbounded forward continuability of the solutions is ensured. The solution
of (\ref{referencial}) with initial condition $x_{0}$ at $t_{0}$ will be denoted by $t\mapsto x(t,t_{0},x_{0})$.

%\textcolor{red}{In order to guarantee that in our context we have existence and uniqueness of solutions, we will assume that the function $g$ is locally Lipschitz in the second variable for all $t \geq t_0$ and all $x$ in a domain $ D \subset \R^{n}$. In this same framework and if $ t_0 \in\R^{+}$ is the initial time considered in the initial values problem, it is appropriate to take into account that every solution of the (\ref{referencial}) system can be extended to the interval $[t_0, \infty)$.}

It will be assumed that the origin is an equilibrium, that is, $g(t,0)=0$ for any $t\geq 0$. The stability of the origin in (\ref{referencial}) will be addressed with the comparison functions \cite{Khalil}:
\begin{itemize}
    \item[$\bullet$] A function $\alpha\colon \mathbb{R}^{+}\to \mathbb{R}^{+}$ 
    is a $\mathcal{K}$ function if $\alpha(0)=0$ and is nondecreasing.
    \item[$\bullet$] A function $\alpha\colon \mathbb{R}^{+}\to \mathbb{R}^{+}$ 
    is a $\mathcal{K}_{\infty}$ function if $\alpha(0)=0$, $\alpha(t)\to +\infty$ as $t\to +\infty$ and it is strictly increasing.
    \item[$\bullet$] A function $\alpha\colon \mathbb{R}^{+}\to (0,+\infty)$ 
    is a $\mathcal{N}$ function if it is nondecreasing.
    \item[$\bullet$] A function $\alpha(t,s)\colon \mathbb{R}^{+}\times \mathbb{R}^{+}\to \mathbb{R}^{+}$ 
    is a $\mathcal{KL}$ function if $\alpha(t,\cdot)\in \mathcal{K}$ and 
    $\alpha(\cdot,s)$ is decreasing with respect to $s$ and $\lim\limits_{s\to +\infty}\alpha(t,s)=0$.
\end{itemize}

Now we define the type of asymptotic stability, which will be the central focus of this work.

\begin{definition}
\label{DNU}
The equilibrium $x=0$ of \eqref{referencial} is globally asymptotically stable 
if, for any $\eta>0$,
there exists a $\delta(t_{0},\eta)>0$ such that
$$
|x_{0}|<\delta(t_{0},\eta)\Rightarrow |x(t,t_{0},x_{0})|<\eta \quad \forall t\geq t_{0}
$$
and for any $x_{0}\in \mathbb{R}^{n}$ it follows that
$\lim\limits_{t\to +\infty}x(t,t_{0},x_{0})=0.$
\end{definition}

The comparison functions allow an alternative characterization for global asymptotic stability.

\begin{proposition}\cite[Prop. 2.5]{Karafyllis}
\label{karaf}
The origin $x=0$ of \eqref{referencial} is globally asymptotically stable if and only if
there exists $\beta\in \mathcal{K}\mathcal{L}$ and $\theta \in \mathcal{N}$ such that for any $x_{0}\in \mathbb{R}^{n}$ it follows that
\begin{equation}
\label{referencial2}
|x(t,t_{0},x_{0})|\leq \beta(\theta(t_{0})|x_{0}|,t-t_{0}) \quad \forall t \geq t_{0}.
\end{equation}
\end{proposition}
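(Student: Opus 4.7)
I would split the proof into the two implications and treat the sufficiency first, since it is essentially immediate from the monotonicity properties of $\beta$ and $\theta$.

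For the sufficiency direction, assume the bound $|x(t,t_0,x_0)| \leq \beta(\theta(t_0)|x_0|,t-t_0)$ holds. Fix $t_0 \geq 0$ and $\eta>0$. Because $\beta(\cdot,0)\in\mathcal{K}$ (so it sends $0$ to $0$ and is nondecreasing), I can select $\delta(t_0,\eta)>0$ with $\beta(\theta(t_0)\delta(t_0,\eta),0)\leq \eta$. For $|x_0|<\delta(t_0,\eta)$ the monotonicity of $\beta$ in both arguments then yields $|x(t,t_0,x_0)|\leq \beta(\theta(t_0)\delta(t_0,\eta),0)\leq \eta$, giving the stability clause of Definition \ref{DNU}. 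Global attraction is immediate from $\lim_{s\to+\infty}\beta(r,s)=0$.

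For the necessity direction, I would build $\beta$ and $\theta$ out of the worst-case envelope
\[
\Phi(t_0,r,s)=\sup\bigl\{|x(t_0+\tau,t_0,x_0)|:\tau\geq s,\ |x_0|\leq r\bigr\}.
\]
GNUAS ensures $\Phi$ is finite everywhere: attractivity forces solutions eventually into a small ball, and continuity bounds them on the remaining compact time interval. By construction $\Phi(t_0,r,s)$ is nondecreasing in $r$, nonincreasing in $s$, and $\Phi(t_0,r,s)\to 0$ as $s\to+\infty$ (attractivity) and as $r\to 0^+$ (stability at $t_0$). To absorb the $t_0$-dependence into a multiplicative factor, I would define
\[
\theta(t_0)=\max\Bigl(1,\ \sup_{0\leq \tau \leq t_0}\Phi(\tau,1,0)\Bigr),
\]
or a suitable nondecreasing majorant of it, which lies in $\mathcal{N}$. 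A rescaling argument combined with a Sontag-type majorization (continuous envelopes of monotone functions of $r$ and $s$ separately) produces a $\mathcal{KL}$ function $\beta$ such that $\Phi(t_0,r,s)\leq \beta(\theta(t_0)r,s)$, which is exactly (\ref{referencial2}) after setting $s=t-t_0$.

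\textbf{Main obstacle.} The delicate step is precisely the factorization: nonuniform stability allows the neighborhood radius $\delta(t_0,\eta)$ to degrade arbitrarily in $t_0$, so $\theta$ must grow quickly enough to absorb this degradation; at the same time, the decay in the second argument of $\beta$ must depend only on the elapsed time $t-t_0$ and not on $t_0$ itself. Verifying that a single $\mathcal{KL}$ envelope $\beta$ dominates the rescaled $\Phi$ uniformly in $t_0$ — and that the associated $\theta$ can be chosen nondecreasing — is the technical heart of the argument and the reason why the classical uniform-case proof cannot be transcribed verbatim.
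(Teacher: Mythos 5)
First, note that the paper does not prove this proposition at all: it is imported verbatim from \cite{Karafyllis}, so there is no in-paper argument to compare yours against and your attempt must stand on its own. Your sufficiency direction is essentially correct (with the small caveat that the paper's definition of class $\mathcal{K}$ omits continuity, which you implicitly use when selecting $\delta$ so that $\beta(\theta(t_0)\delta(t_0,\eta),0)\le\eta$; under the standard continuous definition from \cite{Khalil} this is harmless).

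The necessity direction, however, has a genuine gap, and it sits exactly where you locate the ``technical heart.'' Two points. First, the finiteness of $\Phi$ and the limit $\Phi(t_0,r,s)\to0$ as $s\to+\infty$ do not follow from pointwise attractivity alone; one needs attraction to be \emph{uniform} over the compact ball $|x_0|\le r$ for each fixed $t_0$, which requires an actual argument combining stability of the origin at later times, continuous dependence on initial data and compactness --- it is standard, but it is a step, not an observation. Second, and more seriously, the proposed $\theta(t_0)=\max\bigl(1,\sup_{0\le\tau\le t_0}\Phi(\tau,1,0)\bigr)$ records only the overshoot and cannot absorb a decay rate that degrades with $t_0$. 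Concretely, for $\dot x=-x/(1+t)$ one has $x(t,t_0,x_0)=x_0(1+t_0)/(1+t)$, hence $\Phi(\tau,1,0)=1$ and your formula yields $\theta\equiv1$; the claimed factorization then reduces to a $t_0$-independent bound $\Phi(t_0,r,s)\le\beta(r,s)$. But $\Phi(t_0,r,s)=r(1+t_0)/(1+t_0+s)\to r$ as $t_0\to+\infty$ for each fixed $s$, so any such $\beta$ would have to satisfy $\beta(r,s)\ge r$ for all $s$, contradicting $\beta(r,\cdot)\to0$. (Here $\theta(t_0)=1+t_0$ together with $\beta(u,s)=u/(1+s)$ does work, which shows what the correct $\theta$ must look like.) The function $\theta$ therefore has to be manufactured from the attraction times $T(t_0,r,\eta)$ --- how long solutions issued from $|x_0|\le r$ at time $t_0$ need to enter the $\eta$-ball --- and not merely from the overshoot; this absorption of both the overshoot and the slowdown into a single multiplicative factor inside the first argument of $\beta$ is precisely the content of the Karafyllis--Tsinias construction, and it is the part your sketch asserts rather than proves.
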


\begin{remark}
Observe that:
\begin{enumerate}
    \item[i)] Definition \ref{DNU} considers initial conditions $x_{0}$ inside a ball having radius dependent of the initial time $t_{0}$. If $\delta$ is not dependent on $t_{0}$, it is said (see \textit{e.g.} \cite{Khalil}) that $x=0$ is globally uniformly asymptotically stable.  
    \item[ii)]  If $\theta(\cdot)\equiv 1$ in Proposition \ref{karaf} we also recover the characterization of the global uniform asymptotic stability by comparison functions.
    \item[iii)] Taking into account statements i) and ii), the terms $\delta$ and $\theta(\cdot)$ can be seen as nonuniformities. In this context, Definition \ref{DNU} could also be referred to as nonuniform global asymptotic stability.
\end{enumerate}
\end{remark}

\subsection{Nonuniform exponential stability} 

Let us consider the linear system
\begin{equation}
    \label{lin}
    \dot{x}=A(t)x,
\end{equation}
where $x\in \mathbb{R}^{n}$, $A\colon \mathbb{R}^{+}\mapsto M_{n}(\mathbb{R})$
is a locally integrable matrix function.
A basis of solutions or fundamental matrix of (\ref{lin}) is denoted by $\Phi(t)$, which satisfies $\dot{\Phi}(t)=A(t)\Phi(t)$ and its corresponding transition matrix is $\Phi(t,s)=\Phi(t)\Phi^{-1}(s)$, then the solution
of (\ref{lin}) with initial condition $x_{0}$ at $t_{0}$ verifies $x(t,t_{0},x_{0})=\Phi(t,t_{0})x_{0}$.
%\textcolor{blue}{Recall that the fundamental matrix $\Phi(t)$ is that matrix which satisfies the following matrix differential equation:
%$$\dot{\Phi}(t)=A(t)\Phi(t).$$
%}
%In this section we establish a nonuniform version of nonautonmous Markus-Yamabe Conjecture introduced in \cite{CRMY}. For this purpose we recall the concept of nonuniform exponential dichotomy, its associated spectrum and main properties.

\begin{definition}
\label{dichotomy}
\textnormal{(\cite{BV-CMP}, \cite{Chu}, \cite{Zhang})}
The system \eqref{lin} has a nonuniform exponential dichotomy \textnormal{(NUED)} on a subinterval $J\subseteq \R^+$ if there exist a projector $P(\cdot)$, constants $K\geq 1$, $\alpha>0$ and $\varepsilon\in [0,\alpha)$ such that for any $t,s\in J$ we have
\begin{equation}
\label{dicotomia}
\left\{\begin{array}{rcll}
%\label{nouniforme}
P(t)\Phi(t,s)&=&\Phi(t,s)P(s), & \\
\left \| \Phi(t,s)P(s) \right \|&\leq& Ke^{-\alpha(t-s)+\varepsilon s}, & t\geq s,                      \\
\left \| \Phi(t,s)(I_{n}-P(s)) \right \|&\leq & K e^{-\alpha(s-t)+\varepsilon s},& t\leq s. 
\end{array}\right.
\end{equation}
\end{definition}

\begin{remark}The above definition deserves  a few comments:
\begin{enumerate}
\item[i)] A consequence of the first equation of \eqref{dicotomia} is that $\dim\ker P(t)=\dim\ker P(s)$ for all $t,s\in J;$ this motives that,  in the literature, the projector $P(\cdot)$ is known as invariant projector. 
\item[ii)] If $\varepsilon=0$, we recover the classical uniform exponential dichotomy, also called uniform exponential dichotomy \cite{Kloeden,Siegmund2002}. 
\item[iii)] The function $s\mapsto e^{\varepsilon s}$ is known as nonuniform part. 
\item[iv)] We refer the reader to \cite{CHR,Chu, GJ, LOO, Zhang} for more comments.
\end{enumerate}
\end{remark}

\begin{comment}
\begin{remark}
We emphasize that in \cite[p.540]{Chu} is stated that the nonuniform exponential dichotomy is admitted by any linear system with nonzero Lyapunov exponents, moreover in \cite[Proposition 2.3]{BV-CMP} it is showed a example of a linear nonautonomous system that admits this dichotomy but not the uniform one.
\end{remark}

\begin{remark}
The \textnormal{NUED} on $\mathbb{R}^{+}$ with a non trivial projector $P(\cdot)$ 
implies that any non zero solution $t\mapsto x(t,t_{0},\xi)=\Phi(t,t_{0})\xi$ can be splitted into 
$$
t\mapsto x^{+}(t,t_{0},\xi)=\Phi(t,t_{0})P(t_{0})\xi \quad
\textnormal{and}  \quad t\mapsto x^{-}(t,t_{0},\xi)=\Phi(t,t_{0})[I-P(t_{0})]\xi
$$  
such that $x^{+}(t,t_{0},\xi)$ converges nonuniformly exponentially to the origin when $t\to +\infty$ while $x^{-}(t,t_{0},\xi)$ has a nonuniform exponential growth. This a\-symp\-to\-tic behavior justifies the name of nonuniform exponential dichotomy.
\end{remark}
\end{comment}

A special case of the nonuniform exponential dichotomy is given when the projector is the identity and deserves special attention.
\begin{definition}
\label{NUES}
 The linear system \eqref{lin} is  nonuniformly exponentially stable if and only if there exist constants $K \geq 1, \alpha > 0$ and $\varepsilon \in [0,\alpha)$ such that
    $$\|\Phi(t,s)\| \leq K e^{-\alpha(t-s) + \varepsilon s} \; \textnormal{for any} \; t \geq s,\; \textnormal{with}\; t,s\in J=[T,+\infty).$$ 
     \end{definition}

The nonuniform exponential stability has several properties. Firstly, the \textit{roughness property} states that, if (\ref{lin}) is nonuniformly exponentially stable, then this property can be preserved for any perturbed system 
\begin{equation}
    \label{linper}
    \dot{x}=[A(t)+B(t)]x,
\end{equation}
where $B$ is small in a sense that will be described in the next result:

\begin{proposition}\cite[Th. 3.2]{BV-CMP} 
\label{robustezresultado}
If we assume that the system \eqref{lin} is nonuniformly exponentially stable in $[T,+\infty)$ and $\|B(t)\|\leq \delta e^{-\varepsilon t}$ for $t \in [T,+\infty)$, with $\delta< \frac{\alpha}{K}$. Then the system \eqref{linper} is
nonuniformly exponentially stable, i.e.,
$$\|\Phi_{A+B}(t,s)\|\leq Ke^{-(\alpha-\delta K)(t-s)+\varepsilon s}\; \textnormal{for any} \; t \geq s,\; \textnormal{with}\; t,s\in[T,+\infty).$$
\end{proposition}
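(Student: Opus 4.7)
The plan is to derive the bound on $\|\Phi_{A+B}(t,s)\|$ from an integral identity relating the two transition matrices and then close the estimate via Gronwall's inequality. First I would write the variation of constants formula
\[
\Phi_{A+B}(t,s) = \Phi_{A}(t,s) + \int_{s}^{t} \Phi_{A}(t,u)\, B(u)\, \Phi_{A+B}(u,s)\,du,
\]
which follows because $Y(t):=\Phi_{A+B}(t,s)$ satisfies $\dot Y = A(t)Y + B(t)Y$ with $Y(s)=I_{n}$, so that $B(u)\Phi_{A+B}(u,s)$ can be treated as an inhomogeneity for the linear equation $\dot Z = A(t)Z$.

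Next I would take norms, using the nonuniform exponential stability of $\dot x = A(t)x$ to estimate $\|\Phi_{A}(t,u)\| \leq K e^{-\alpha(t-u)+\varepsilon u}$ for $T \leq s \leq u \leq t$, together with the hypothesis $\|B(u)\| \leq \delta e^{-\varepsilon u}$. A pleasant feature of the formulation is that the nonuniform factor $e^{\varepsilon u}$ coming from the dichotomy bound cancels exactly the $e^{-\varepsilon u}$ produced by the perturbation bound, yielding
\[
\|\Phi_{A+B}(t,s)\| \leq K e^{-\alpha(t-s)+\varepsilon s} + K\delta \int_{s}^{t} e^{-\alpha(t-u)}\|\Phi_{A+B}(u,s)\|\,du.
\]

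Finally, I would set $\psi(t) := e^{\alpha t}\|\Phi_{A+B}(t,s)\|$, so that the inequality above becomes
\[
\psi(t) \leq K e^{(\alpha+\varepsilon) s} + K\delta \int_{s}^{t} \psi(u)\,du,
\]
and apply the standard Gronwall lemma to conclude $\psi(t) \leq K e^{(\alpha+\varepsilon)s}\, e^{K\delta(t-s)}$, which rearranges to the claimed bound
\[
\|\Phi_{A+B}(t,s)\| \leq K e^{-(\alpha-\delta K)(t-s)+\varepsilon s}.
\]
The assumption $\delta < \alpha/K$ guarantees that $\alpha - \delta K > 0$, so the perturbed system is nonuniformly exponentially stable on $[T,+\infty)$ in the sense of Definition \ref{NUES}.

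The only technical point I would flag is the a priori local boundedness of $u \mapsto \|\Phi_{A+B}(u,s)\|$ on compact subintervals of $[s,t]$, required to legitimately invoke Gronwall without circular reasoning; this follows from the continuity of the fundamental matrix of $A+B$ and the local integrability of $A(\cdot)+B(\cdot)$, and is the kind of routine check I would include for rigor rather than a genuine obstacle.
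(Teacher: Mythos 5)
Your argument is correct and is precisely the standard roughness proof: the variation-of-constants identity, the exact cancellation of the nonuniform factors $e^{\varepsilon u}$ and $e^{-\varepsilon u}$, and a Gronwall estimate on $\psi(t)=e^{\alpha t}\|\Phi_{A+B}(t,s)\|$, which yields the stated constant $K$ and exponent $-(\alpha-\delta K)(t-s)+\varepsilon s$. The paper does not prove this proposition itself --- it is quoted from Theorem 3.2 of Barreira--Valls \cite{BV-CMP}, where essentially the same argument is carried out --- so there is nothing to correct.
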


The Definition \ref{NUES} and Proposition \ref{robustezresultado} have been stated for nonuniform exponential stability on an interval $[T,+\infty)$. The next result ensures that the roughness property can be extended to $\mathbb{R}^{+}$.

\begin{lemma}
\label{extension}
If the linear system 
\begin{equation}
\label{sistemaC}
    \dot{x}=C(t)x
\end{equation}
has \textnormal{NUED} with projector $P(\cdot)$ on $[T,+\infty)$, then it also has a \textnormal{NUED} in $\mathbb{R}^{+}$ with the same projector. 
\end{lemma}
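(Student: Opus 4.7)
The plan is to extend the invariant projector $P(\cdot)$ defined on $[T,+\infty)$ to all of $\mathbb{R}^{+}$ by using the transition operator itself, and then obtain the dichotomy estimates on $\mathbb{R}^{+}$ by a case analysis in which the compact slab $[0,T]$ is absorbed into a larger constant $K'\geq K$, keeping the exponents $\alpha$ and $\varepsilon$ unchanged.

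\textbf{Step 1 (Extension of the projector).} For $s\in[0,T)$, define
\[
\widetilde{P}(s):=\Phi(s,T)\,P(T)\,\Phi(T,s),
\]
and set $\widetilde{P}(s)=P(s)$ on $[T,+\infty)$. Since $P(T)$ is a projector and $\Phi(s,T)\Phi(T,s)=I_{n}$, the map $\widetilde{P}(s)$ is a projector for every $s\geq 0$. The invariance $\widetilde{P}(t)\Phi(t,s)=\Phi(t,s)\widetilde{P}(s)$ for all $t,s\in \mathbb{R}^{+}$ follows by direct computation using $\Phi(t,s)\Phi(s,r)=\Phi(t,r)$ together with the invariance of $P(\cdot)$ on $[T,+\infty)$.

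\textbf{Step 2 (Boundedness on the compact part).} Because $C(\cdot)$ is locally integrable, the transition matrix $\Phi(t,s)$ is continuous on $[0,T]\times[0,T]$, so
\[
M:=\sup_{0\leq u,v\leq T}\|\Phi(u,v)\|<+\infty,
\]
and consequently $\|\widetilde{P}(s)\|\leq M^{2}\|P(T)\|$ for $s\in[0,T]$.

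\textbf{Step 3 (Case analysis for the stable estimate $t\geq s$).} I would split into three cases:

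\emph{(a)} If $T\leq s\leq t$, the bound is immediate from the hypothesis.

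\emph{(b)} If $0\leq s\leq t\leq T$, one uses $\|\Phi(t,s)\widetilde{P}(s)\|\leq M^{3}\|P(T)\|$, and since $e^{-\alpha(t-s)+\varepsilon s}\geq e^{-\alpha T}$ on this range, the bound
\[
\|\Phi(t,s)\widetilde{P}(s)\|\leq M^{3}\|P(T)\|e^{\alpha T}\,e^{-\alpha(t-s)+\varepsilon s}
\]
follows.

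\emph{(c)} If $0\leq s<T\leq t$, factor $\Phi(t,s)\widetilde{P}(s)=\Phi(t,T)P(T)\Phi(T,s)$. The first factor is bounded by $Ke^{-\alpha(t-T)+\varepsilon T}$ thanks to the dichotomy on $[T,+\infty)$, and the second by $M$. Writing $e^{-\alpha(t-T)+\varepsilon T}=e^{-\alpha(t-s)+\varepsilon s}\cdot e^{(\alpha+\varepsilon)(T-s)}$ and using $0\leq s\leq T$, the factor $e^{(\alpha+\varepsilon)(T-s)}$ is bounded by $e^{(\alpha+\varepsilon)T}$, yielding again an estimate of the desired form.

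\textbf{Step 4 (Case analysis for the unstable estimate $t\leq s$).} I would repeat the three cases above with $I_{n}-\widetilde{P}(\cdot)$ in place of $\widetilde{P}(\cdot)$, the only differences being that one uses the complementary bound from the hypothesis on $[T,+\infty)$ and that the roles of $t$ and $s$ are exchanged. Taking $K'$ to be the maximum of $K$ and all the constants arising in Steps 3 and 4 gives uniform dichotomy estimates on $\mathbb{R}^{+}$ with the same $\alpha$ and $\varepsilon$.

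There is no essential obstacle here: the result is essentially a bookkeeping exercise, since on the compact piece $[0,T]$ everything is bounded and the crucial exponential behavior of $\Phi$ is already furnished on $[T,+\infty)$. The only care needed is to keep the exponent $\varepsilon$ intact when rewriting $e^{-\alpha(t-T)+\varepsilon T}$ in terms of $e^{-\alpha(t-s)+\varepsilon s}$, which is why the constants $(\alpha+\varepsilon)(T-s)$ appear but only contribute a uniform multiplicative factor $e^{(\alpha+\varepsilon)T}$ absorbed into $K'$.
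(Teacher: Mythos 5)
Your proof is correct and follows essentially the same route as the paper's: split into the cases $0\le s\le t\le T$ and $0\le s\le T\le t$, bound the transition matrix on the compact slab by continuity, and absorb the resulting factor (including the $e^{(\alpha+\varepsilon)T}$ coming from trading $e^{-\alpha(t-T)+\varepsilon T}$ for $e^{-\alpha(t-s)+\varepsilon s}$) into an enlarged constant while keeping $\alpha$ and $\varepsilon$. You are in fact somewhat more careful than the paper, which tacitly treats $P(\cdot)$ as already defined and continuous on $[0,T)$ and bounds $\|\Phi_{C}(t,T)\|$ rather than $\|\Phi_{C}(t,T)P(T)\|$ in the mixed case; your explicit extension $\widetilde{P}(s)=\Phi(s,T)P(T)\Phi(T,s)$ and the factorization $\Phi(t,s)\widetilde{P}(s)=\Phi(t,T)P(T)\Phi(T,s)$ address exactly those points.
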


\begin{proof}
We denote $\Phi_{C}(t,s)$ as the transition matrix of the system (\ref{sistemaC}). If this system admits nonuniform exponential dichotomy on $[T,+\infty)$, then we have the following estimate for the projector $P(\cdot)$:
$$\|\Phi_{C}(t,s)P(s)\|\leq Ke^{-\alpha(t-s)+\varepsilon s},\;\; \textnormal{with}\;\; t\geq s\geq T.$$

In order to complete the proof, we will consider two cases for the pa\-ra\-me\-ters $t,s$, namely, $0\leq s\leq  t \leq T$ and $0\leq s\leq T\leq t$. For the first case, due that the transition matrix and the projector are continuous, we have that 
$$
\begin{array}{rcl}
\| \Phi_{C}(t,s)P(s)\|&\leq& L=Le^{-\alpha(t-s)+\varepsilon s}e^{\alpha(t-s)-\varepsilon s}\leq Le^{\alpha T}e^{-\alpha(t-s)+\varepsilon s}
\end{array}
$$
and for the second one,  we use the hypothesis and properties of $\Phi_{C}$:
$$\begin{array}{rcl}
\|\Phi_{C}(t,s)P(s)\|&=&\|\Phi_{C}(t,T)\Phi_{C}(T,s)P(s)\|\leq\|\Phi_{C}(t,T)\|\|\Phi_{C}(T,s)P(s)\|, \\
&\leq& LKe^{-\alpha(t-T)+\varepsilon T}.
\end{array}
$$

In summary, if $0\leq s\leq t$ we prove that 
$$\|\Phi_{C}(t,s)P(s)\|\leq LKe^{\alpha T}e^{-\alpha(t-s)+\varepsilon s},$$
and this same reasoning will allow us to show the estimation associated with the projector $I-P(\cdot)$ and $0\leq t\leq s$.
\end{proof}

The previous Lemma extends a result made by W. Coppel \cite[p.13]{Coppel} in the uniform
context. A direct consequence is that we can extend the interval $[T,+\infty)$ to $\mathbb{R}^{+}$ in the Definition \ref{NUES} and also in Proposition \ref{robustezresultado} without considering additional conditions for $B(t)$ on the interval $[0,T]$.

On the other hand, it is useful to recall that the nonuniform exponential sta\-bi\-li\-ty is also a particular case of global asymptotic stability as states the following result.

\begin{lemma}
\label{implicanciaglobal}
If the linear system \eqref{lin} is nonuniformly exponentially stable then it is globally asymptotically stable.
\end{lemma}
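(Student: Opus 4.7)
The plan is to verify Definition \ref{DNU} directly, exploiting the fact that for a linear system the solution is explicitly $x(t,t_0,x_0) = \Phi(t,t_0)x_0$, so the NUES estimate from Definition \ref{NUES} translates almost verbatim into the desired stability bounds. Equivalently, one could produce the comparison functions required by Proposition \ref{karaf}: setting $\theta(s) = Ke^{\varepsilon s}$ (which belongs to $\mathcal{N}$ since it is positive and nondecreasing) and $\beta(r,\tau) = re^{-\alpha \tau}$ (which belongs to $\mathcal{KL}$), the inequality $|\Phi(t,t_0)x_0| \leq \beta(\theta(t_0)|x_0|, t-t_0)$ is an immediate rewriting of the NUES bound.

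First, I would take care of the fact that Definition \ref{NUES} states the estimate only on some interval $[T,+\infty)$, whereas GNUAS requires a statement for every initial time $t_0 \geq 0$. Since nonuniform exponential stability is just the special case of \textnormal{NUED} with projector $P(\cdot) = I_n$, Lemma \ref{extension} applies and the estimate $\|\Phi(t,s)\| \leq \tilde{K}\,e^{-\alpha(t-s)+\varepsilon s}$ can be propagated to the whole of $\mathbb{R}^+$ (with a possibly larger constant $\tilde{K}$).

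Next I would verify the stability clause. Fix $t_0 \geq 0$ and $\eta > 0$. For $t \geq t_0$ the factor $e^{-\alpha(t-t_0)}$ is bounded above by $1$, so
\begin{equation*}
|x(t,t_0,x_0)| \;\leq\; \|\Phi(t,t_0)\|\,|x_0| \;\leq\; \tilde{K}\,e^{\varepsilon t_0}\,|x_0|.
\end{equation*}
Choosing $\delta(t_0,\eta) = \eta/(\tilde{K}e^{\varepsilon t_0})$ makes the right-hand side strictly less than $\eta$ as soon as $|x_0|<\delta(t_0,\eta)$. Note that $\delta$ explicitly depends on $t_0$ through the factor $e^{\varepsilon t_0}$, which is exactly what distinguishes the nonuniform case from the uniform one.

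Finally, for global attractivity I would use the full estimate: for any $x_0 \in \mathbb{R}^n$ and any $t_0 \geq 0$,
\begin{equation*}
|x(t,t_0,x_0)| \;\leq\; \tilde{K}\,e^{\varepsilon t_0}\,|x_0|\,e^{-\alpha(t-t_0)} \;\longrightarrow\; 0 \quad \text{as } t\to +\infty,
\end{equation*}
since $\alpha>0$ and the prefactor $\tilde{K}e^{\varepsilon t_0}|x_0|$ does not depend on $t$. There is really no deep obstacle here; the one mildly delicate point is to remember that NUES as stated in Definition \ref{NUES} lives on $[T,+\infty)$, so Lemma \ref{extension} (or an equivalent continuity argument on $[0,T]$) is needed before the estimate can be fed into Definition \ref{DNU} for arbitrary $t_0 \geq 0$.
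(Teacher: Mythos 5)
Your proposal is correct and follows essentially the same route as the paper: the paper's proof simply observes that the NUES bound $|\Phi(t,t_0)x_0|\leq Ke^{\varepsilon t_0}e^{-\alpha(t-t_0)}|x_0|$ instantiates inequality (\ref{referencial2}) with $\theta(t_0)=e^{\varepsilon t_0}$ and $\beta(r,\tau)=Kre^{-\alpha\tau}$ and invokes Proposition \ref{karaf}. Your version only adds a (harmless) redistribution of the constant $K$ into $\theta$, a direct verification of Definition \ref{DNU}, and an explicit appeal to Lemma \ref{extension} for the passage from $[T,+\infty)$ to $\mathbb{R}^+$ --- a point the paper handles in the remark following that lemma rather than inside this proof.
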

\begin{proof}
As the linear system is nonuniformly exponentially stable, that is
$$
|x(t,t_{0},x(t_{0}))|=|\Phi(t,t_{0})x(t_{0})|\leq Ke^{\varepsilon t_{0}}e^{-\alpha (t-t_{0})}|x(t_{0})|,
$$
clearly the inequality (\ref{referencial2}) is verified with the functions $\theta(t_{0})=e^{\varepsilon t_{0}}$ and $\beta(r,t-t_0)=Kre^{-\alpha (t-t_0)}$ and the
result follows from Proposition \ref{karaf}.
\end{proof}

\begin{remark}Let us recall that in the uniform framework, namely, when $\varepsilon=0$, the Lemma \ref{implicanciaglobal} also has a converse statement and there exists an equivalence between uniform exponential stability and global uniform asymptotical stability. We refer to \cite[pp.156--157]{Khalil} for details.
\end{remark}

\subsection{The nonuniform exponential dichotomy spectrum}

\begin{definition}\textnormal{(\cite{Chu}, \cite{Zhang})}
%\label{SPEC}
The nonuniform spectrum (also called nonuniform exponential dichotomy spectrum) of (\ref{lin}) is the set $\Sigma(A)$ of $\lambda\in\R$ such that the system
 \begin{equation}
 \label{sistemaperturbado}
\dot{x}=[A(t)-\lambda I_{n}]x
 \end{equation}
does not have a \textnormal{NUED} on $\R^{+}$ stated in Definition \ref{dichotomy}.
\end{definition}

\begin{remark}
The construction of $\Sigma(A)$ has been carried out by J. Chu  \textit{et al.} \cite{Chu} and X. Zhang \cite{Zhang} by emulating the work developed by S. Siegmund \cite{Siegmund2002} in order to provide a friendly and simple presentation of the uniform exponential dichotomy spectrum, which backs to the seminal work 
of  R.J. Sacker and G. Sell \cite{SS}.
\end{remark}

The following result provides a description for $\Sigma(A)$:  
\begin{proposition}\textnormal{(\cite{Chu}, \cite{Kloeden}, \cite{Siegmund2002},  \cite{Zhang}), }
%\label{comp-spec}
If the transition matrix $\Phi (t,s)$  of \eqref{lin}  has a half $(Me^{\delta s},\nu)$--nonuniform bounded growth (see \cite{CHR}), namely, there exist constants $M\geq1$, $\nu\geq0$ and $\delta\geq0$ such that $$\left \| \Phi (t,s) \right \| \leq M e^{\nu  (t-s)  + \delta s}, \quad t\geq s\geq 0$$
its nonuniform spectrum is the union of $m$ intervals where $0<m\leq n$, that is,
\begin{equation}
\label{espectro}
\Sigma(A)=\left\{\begin{array}{c}
     [a_1,b_1]  \\
      \textnormal {or}\\
     (-\infty,b_1]
\end{array}\right\}\cup\;[ a_2,b_2 ]\;\cup\;\cdots\;\cup\;[a_{m-1},b_{m-1}]\;\cup [a_{m},b_{m}],
\end{equation}
with $-\infty< a_1\leq b_1<\ldots<a_m\leq b_m <+\infty$.
\end{proposition}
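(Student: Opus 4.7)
The plan is to adapt the Sacker--Sell construction to the nonuniform setting. I would introduce the resolvent set $\rho(A):=\mathbb{R}\setminus\Sigma^{+}(A)$, so that $\lambda\in\rho(A)$ exactly when (\ref{sistemaperturbado}) admits a \textnormal{NUED} on $\mathbb{R}^{+}$ with some projector $P_{\lambda}(\cdot)$ of rank $r(\lambda)\in\{0,1,\dots,n\}$. The argument would then break into three steps: (i) $\rho(A)$ is open and $r$ is locally constant on it; (ii) $r$ is monotone non-increasing in $\lambda$ and each integer value of $r$ is realised on at most one connected component; (iii) the growth bound forces $\Sigma^{+}(A)$ to be non-empty, bounded above and to have the structural form (\ref{espectro}).

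For step (i), I would fix $\lambda_{0}\in\rho(A)$ and write $A(t)-\lambda I_{n}=(A(t)-\lambda_{0}I_{n})+(\lambda_{0}-\lambda)I_{n}$, regarding the second summand as a constant perturbation of norm $|\lambda_{0}-\lambda|$. A projector version of Proposition \ref{robustezresultado}, applied separately along the stable and unstable invariant subspaces of the unperturbed dichotomy, would transfer the \textnormal{NUED} for $|\lambda-\lambda_{0}|$ small enough, producing a perturbed projector $P_{\lambda}$ depending continuously on $\lambda$; since its rank is integer valued, $r(\cdot)$ would be locally constant.

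For step (ii), if $\lambda_{1}<\lambda_{2}$ both lie in $\rho(A)$, multiplying any solution of the $\lambda_{2}$-shifted system in its stable subspace by $e^{(\lambda_{2}-\lambda_{1})(t-s)}$ produces a solution of the $\lambda_{1}$-shifted system that still decays exponentially, hence $r(\lambda_{2})\leq r(\lambda_{1})$. If moreover $r(\lambda_{1})=r(\lambda_{2})$, then the two stable subspaces coincide and the \textnormal{NUED} extends across the whole interval $[\lambda_{1},\lambda_{2}]$, so each integer rank corresponds to at most one component of $\rho(A)$; consequently $\rho(A)$ has at most $n+1$ components and $\Sigma^{+}(A)$ at most $n$ closed intervals. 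For step (iii), the growth bound yields $\|\Phi_{\lambda}(t,s)\|\leq Me^{(\nu-\lambda)(t-s)+\delta s}$, so every $\lambda>\nu+\delta$ belongs to $\rho(A)$ with $P_{\lambda}\equiv I_{n}$ in view of Definition \ref{NUES}; therefore $\Sigma^{+}(A)$ is bounded above and its rightmost piece is either $[a_{m},b_{m}]$ or $[a_{m},+\infty)$, while the leftmost piece may be of the form $(-\infty,b_{1}]$ since no symmetric lower growth is assumed. Non-emptiness ($m\geq 1$) would follow by contradiction: were $\rho(A)=\mathbb{R}$, then (\ref{sistemaperturbado}) would be \textnormal{NUES} for every $\lambda\in\mathbb{R}$, contradicting the actual growth of $\Phi(t,s)$ when $\lambda$ is taken very negative.

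The main technical obstacle is step (i): Proposition \ref{robustezresultado} is formulated only for \textnormal{NUES}, whereas what is required is its extension to \textnormal{NUED} with a genuine nontrivial projector and, crucially, continuous dependence of the perturbed projector on $\lambda$. Establishing this carefully, typically via a Green's function formulation of the dichotomy and a contraction argument on an appropriately weighted function space, is the technical core; once available, the remainder of the argument would follow the Sacker--Sell template as adapted to the nonuniform setting by Chu et al., Siegmund and Zhang.
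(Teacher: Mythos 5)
The paper itself offers no proof of this proposition: it is imported as a black box from \cite{Chu}, \cite{Siegmund2002} and \cite{Zhang}, so your sketch can only be measured against the standard Sacker--Sell/Siegmund construction, which is indeed the template you follow. Within that template there are two concrete defects. First, the monotonicity in your step (ii) runs the wrong way. The transition matrix of (\ref{sistemaperturbado}) is $\Phi_{\lambda}(t,s)=e^{-\lambda(t-s)}\Phi(t,s)$, so increasing $\lambda$ inserts an \emph{extra decaying} factor and enlarges the stable subspace; hence $r(\lambda_{1})\leq r(\lambda_{2})$ for $\lambda_{1}<\lambda_{2}$, not the reverse. Your own manipulation exposes the problem: a solution of the $\lambda_{2}$-shifted system decaying like $e^{-\alpha(t-s)}$, once multiplied by the growing factor $e^{(\lambda_{2}-\lambda_{1})(t-s)}$, need not decay at all when $\lambda_{2}-\lambda_{1}>\alpha$. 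The nondecreasing rank, combined with $r(\lambda)=n$ for $\lambda>\nu+\delta$ (which you do get right), is precisely what forces the spectrum to be bounded above while leaving it possibly unbounded below, as in (\ref{espectro}).

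Second, your non-emptiness argument in step (iii) does not close under the stated one-sided growth hypothesis. Take the scalar system $\dot{x}=-tx$ on $\R^{+}$: here $\Phi(t,s)=e^{-(t^{2}-s^{2})/2}\leq 1$, so the half bounded growth holds with $M=1$, $\nu=\delta=0$, yet for every $\lambda\in\R$ one has $|\Phi_{\lambda}(t,s)|=e^{-(t-s)\left[\lambda+(t+s)/2\right]}\leq e^{(1-\lambda)^{2}/2}\,e^{-(t-s)}$ for $t\geq s\geq 0$, so the shifted system is exponentially stable for \emph{every} $\lambda$ and the spectrum is empty: taking ``$\lambda$ very negative'' yields no contradiction because $\Phi$ decays faster than any exponential. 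Obtaining $m\geq 1$ genuinely requires a lower bound on $\|\Phi(t,s)\|$ (two-sided bounded growth), which is how the cited references argue; as stated, your contradiction is not one. Finally, as you yourself concede, step (i) --- roughness of a \textnormal{NUED} with a genuinely nontrivial projector together with continuity of $P_{\lambda}$ in $\lambda$ --- is the technical heart of the whole theorem and is not supplied by Proposition \ref{robustezresultado}, which covers only the full-projector case. Until that lemma is actually established, the proposal remains a programme rather than a proof.
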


The above intervals are called 
\textit{spectral intervals} while the sets $\rho_{i+1}(A)=(b_{i},a_{i+1})$ for $i=1,\ldots,m-1$ are called \textit{spectral gaps}. 
Spectral interval and spectral gaps are used to develop a dynamic study of (\ref{lin}) and we refer the reader to \cite{Chu} for details.

The next result has been proved in \cite[Lemma 1]{CHR} and provides a characterization of the nonuniform exponential stability of system (\ref{lin}) in terms of $\Sigma(A)$: 

\begin{proposition}
\label{SPEST}
The linear system \eqref{lin} is nonuniformly exponentially stable if and only if
$\Sigma(A)\subset (-\infty,0)$.
\end{proposition}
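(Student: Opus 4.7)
The plan is to prove both directions using the definition of $\Sigma(A)$ in terms of the nonuniform exponential dichotomy admissibility of the shifted system (\ref{sistemaperturbado}), together with the structural description (\ref{espectro}) and the identification of the projectors in the extreme spectral gaps.

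For the direction NUES $\Rightarrow$ $\Sigma(A)\subset(-\infty,0)$, I would fix any $\lambda\geq 0$ and use the fact that the transition matrix of (\ref{sistemaperturbado}) factors as $\Phi_{\lambda}(t,s)=e^{-\lambda(t-s)}\Phi(t,s)$. Multiplying the NUES estimate by $e^{-\lambda(t-s)}$ gives
\[
\|\Phi_{\lambda}(t,s)\| \leq K\,e^{-(\alpha+\lambda)(t-s)+\varepsilon s},\quad t\geq s\geq 0,
\]
with $\alpha+\lambda>\varepsilon$. Taking $P=I_{n}$ in Definition \ref{dichotomy}, the invariance condition is automatic and the unstable inequality is trivially verified, so (\ref{sistemaperturbado}) admits a NUED on $\R^{+}$ and therefore $\lambda\notin\Sigma(A)$, as desired.

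For the converse, I would exploit the structure of $\Sigma(A)$. Assuming $\Sigma(A)\subset(-\infty,0)$, the rightmost endpoint $b_{m}$ of (\ref{espectro}) satisfies $b_{m}<0$, so $0$ belongs to the rightmost spectral gap $\rho_{m+1}(A)=(b_{m},+\infty)$. By items (a)--(b) listed right after (\ref{espectro}), the invariant projector associated to this gap is $P_{m+1}=I_{n}$. Since (\ref{sistemaperturbado}) with $\lambda=0$ coincides with (\ref{lin}), the original system admits a NUED on $\R^{+}$ with projector $I_{n}$, and the first inequality of (\ref{dicotomia}) then reduces to $\|\Phi(t,s)\|\leq Ke^{-\alpha(t-s)+\varepsilon s}$ for $t\geq s$, which is exactly Definition \ref{NUES}.

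No real obstacle is anticipated: the proposition is essentially the observation that the rightmost spectral gap, which contains $0$ under the hypothesis $\Sigma(A)\subset(-\infty,0)$, corresponds to the projector $I_{n}$. The only slightly delicate point is that the converse direction relies on the structural form (\ref{espectro}), which formally requires the half-nonuniform bounded growth hypothesis; this is the ambient framework of the paper, and if needed one could alternatively argue contrapositively that any NUED with a nontrivial complementary projector would force a positive element into $\Sigma(A)$.
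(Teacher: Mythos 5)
Your proof is correct and follows the standard argument; note that the paper does not actually prove Proposition \ref{SPEST} but cites \cite[Lemma 1]{CHR} for it, and your route is the expected one (shift the stability estimate and take $P=I_{n}$ for one direction; locate $0$ in the rightmost spectral gap, whose projector is $P_{m+1}=I_{n}$, for the other). The caveat you flag is genuine rather than cosmetic --- the converse really does need the half $(Me^{\delta s},\nu)$--nonuniform bounded growth hypothesis underlying (\ref{espectro}), without which the statement can fail (e.g.\ $\dot{x}=tx$ admits a dichotomy with $P=0$ for every shift $\lambda$, so its nonuniform spectrum on $\mathbb{R}^{+}$ is empty, yet the system is unstable) --- and, for completeness, the forward direction should invoke Lemma \ref{extension} to pass from the interval $[T,+\infty)$ appearing in Definition \ref{NUES} to the dichotomy on all of $\mathbb{R}^{+}$ required in the definition of $\Sigma(A)$.
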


The above result generalizes a well known 
stability criterion: the linear 
autonomous system $\dot{x}=Ax$
is uniformly exponentially stable if and only if $A$ is a Hurwitz matrix, namely, all its eigenvalues have negative real part. However, it can be proved that if $A$ is a constant matrix in (\ref{lin}), then
$\Sigma(A)$ coincides with the real part of its eigenvalues.

As we see in the Introduction, \textbf{MYC} is stated in terms of Hurwitz matrices. We can see that the nonautonomous nonuniform Markus--Yamabe Conjecture mimics the previous one by considering a generalization in terms of dichotomy spectra.

\subsection{Statement of the conjecture} As we have stated the assumptions, we are now in a position to state our main result for this section. 

\medskip

In \cite{CHR} we have introduced the following nonuniform global stability problem for nonautonomous systems of ordinary differential equations:
\begin{conjecture}[\bf{Nonuniform Markus--Yamabe Conjecture (NUMYC)}]

%\newline
Let us consider the nonlinear system
\begin{equation}
\label{MY}
\dot{x} = f(t,x)
\end{equation}
where $f\colon \mathbb{R}_{0}^{+}\times\mathbb{R}^{n}\to \mathbb{R}^{n}$. If $f$ satisfies the following conditions
\begin{itemize}
\item[\textbf{(G1)}] $f$ is continuous in $\mathbb{R}^{+}\times \mathbb{R}^{n}$ and $C^1$ with respect to $x$. Moreover, $f$ is such that the forward solutions are defined in $[t_{0},+\infty)$ for any $t_{0}\geq 0$.
\item[\textbf{(G2)}] $f(t,x)=0$ if $x=0$ for all $t\geq 0$.
\item[\textbf{(G3)}] For any piecewise continuous function  $t\mapsto \omega(t)$, the linear system
\begin{equation*}
%\label{MYNA}
\dot{\vartheta} = Jf(t,\omega(t))\vartheta,
\end{equation*}
where $Jf(t,\cdot)$ is the jacobian matrix of $f(t,\cdot)$, 
has a $(Ke^{\varepsilon s},\gamma)$--nonuniform exponential dichotomy spectrum satisfying
\begin{equation*}
%\label{SMY}
\Sigma
(Jf(t,\omega(t))) \subset (-\infty,0).
\end{equation*}
\end{itemize}

Then the trivial solution of the nonlinear system \eqref{MY} is globally nonuniformly asymptotically stable.
\end{conjecture}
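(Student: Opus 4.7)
\medskip

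\noindent \textbf{Proof proposal.} My plan is to reduce the nonlinear stability problem to a linear nonautonomous one along each forward trajectory, invoke (G3) together with Proposition \ref{SPEST}, and then close the gap between the linearized and nonlinear dynamics via the roughness of nonuniform exponential dichotomy (Proposition \ref{robustezresultado}).

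Fix $t_{0}\geq 0$ and $x_{0}\in \mathbb{R}^{n}$, and let $\varphi(t):=x(t,t_{0},x_{0})$ denote the forward solution of (\ref{MY}) granted by (G1). By (G2) and the $C^{1}$ dependence of $f$ in $x$, the mean value theorem applied to $s\mapsto f(t,s\varphi(t))$ yields
$$
\dot{\varphi}(t) = M(t)\varphi(t), \qquad M(t):=\int_{0}^{1}Jf(t,s\varphi(t))\, ds,
$$
so $\varphi$ itself solves the linear nonautonomous system $\dot{z}=M(t)z$. If this system can be shown to be nonuniformly exponentially stable, then $|\varphi(t)|\leq Ke^{\varepsilon t_{0}}e^{-\alpha(t-t_{0})}|x_{0}|$, and Proposition \ref{karaf} delivers the desired global nonuniform asymptotic stability of the origin.

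The map $t\mapsto \varphi(t)$ is continuous on $[t_{0},+\infty)$ and extends piecewise continuously to $\mathbb{R}^{+}$ (for instance by setting $\varphi(t):=x_{0}$ on $[0,t_{0}]$). By (G3) we have $\Sigma^{+}(Jf(t,\varphi(t)))\subset (-\infty,0)$, so Proposition \ref{SPEST} combined with Lemma \ref{extension} promotes this to nonuniform exponential stability of the variational equation $\dot{\vartheta}=Jf(t,\varphi(t))\vartheta$, with constants $K\geq 1$, $\alpha>0$, $\varepsilon\in[0,\alpha)$. Writing $M(t)=Jf(t,\varphi(t))+B(t)$ with
$$
B(t):=\int_{0}^{1}\bigl[Jf(t,s\varphi(t)) - Jf(t,\varphi(t))\bigr]\, ds,
$$
the equation $\dot{z}=M(t)z$ is a perturbation of a nonuniformly exponentially stable system, and Proposition \ref{robustezresultado} would close the argument \emph{provided} that $\|B(t)\|\leq \delta e^{-\varepsilon t}$ with $\delta<\alpha/K$.

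The smallness condition on $B(t)$ is where the main obstacle lies. Near the origin, uniform continuity of $Jf(t,\cdot)$ gives $\|B(t)\|\leq C|\varphi(t)|$ on a neighbourhood, so once decay has set in the roughness mechanism is self-consistent; but \emph{a priori} the trajectory may excurse far from the origin before returning, in which case $B(t)$ is entirely uncontrolled. A natural route is a continuation/bootstrap argument: (G3) with $\omega\equiv 0$ furnishes a local decay rate near the origin, which one would try to propagate along successive pieces of the trajectory using (G3) applied to each piece, glueing the resulting NUEDs. However, since the classical autonomous Markus--Yamabe conjecture is false for $n\geq 3$ \cite{CEGMH}, an unconditional proof of \textbf{NNMYC} in the present generality is not to be expected; this very circularity is what motivates the authors to restrict their attention in the sequel to the bounded variant \textbf{B-NNMYC}, in which an \emph{a priori} boundedness of $\varphi$ tames the oscillation of $B(t)$ and makes the roughness step viable.
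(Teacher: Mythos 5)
This statement is a \emph{conjecture}, and the paper offers no proof of it: the authors only record that special cases (the scalar case $n=1$, a quasilinear family, and certain upper triangular vector fields) were verified in \cite{CHR}, and everything else in the paper is conditional on \textbf{NNMYC} or \textbf{B-NNMYC} being true. So there is no proof in the paper to compare yours against, and your own closing admission that ``an unconditional proof \dots is not to be expected'' is the correct assessment. What you have written is a strategy sketch, not a proof, and it should be judged as such.

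Within the sketch, you have correctly located the obstruction, but two points deserve to be sharpened. First, the Hadamard-type factorization $f(t,\varphi(t))=M(t)\varphi(t)$ with $M(t)=\int_{0}^{1}Jf(t,s\varphi(t))\,ds$ produces an \emph{averaged} Jacobian, and hypothesis \textbf{(G3)} only controls matrices of the form $Jf(t,\omega(t))$ for a single piecewise continuous curve $\omega$; the spectral condition is not preserved under convex combinations of Jacobians (this is precisely the convexity failure that defeats the naive approach to the classical \textbf{MYC}), so nothing in \textbf{(G3)} constrains $\dot z=M(t)z$ directly. Second, even granting boundedness of the trajectory as in \textbf{(G3$^{*}$)}, Proposition \ref{robustezresultado} demands the \emph{decaying} bound $\|B(t)\|\leq\delta e^{-\varepsilon t}$; boundedness of $\varphi$ yields at best $\|B(t)\|\leq C$ with no decay, so your final remark that boundedness ``makes the roughness step viable'' is too optimistic --- the roughness mechanism does not close even in the bounded variant, which is consistent with the fact that \textbf{B-NNMYC} also remains open in the paper. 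In short: the reduction-to-linear-plus-roughness route is a reasonable first attack and its failure mode is instructive, but no step of it can currently be completed, and the paper does not claim otherwise.
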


The property \textbf{(G1)}
is essentially technical. In fact, it implies the existence, uniqueness and infinite forward continuability of the solutions. Moreover, \textbf{(G2)} and \textbf{(G3)}
emulates the classical Markus--Yamabe Conjecture since
\textbf{(G2)} recalls that the origin is an equilibrium while \textbf{(G3)} 
combined with Lemma \ref{SPEST} say that the linearization of the vector field $x\mapsto f(t,x)$ around any piecewise continuous function $\omega(t)$ is nonuniformly exponentially stable and mimics Hurwitz property stated in the original conjecture.

The \textbf{NUMYC} is well posed. In fact, in \cite{CHR} we proved that it is verified for the: i) case $n=1$, ii) a family of quasilinear vector fields, iii) upper triangular vector fields whose nondiagonal part satisfies technical boundedness properties.

Now, we will consider a particular case of the above conjecture:
\begin{conjecture}[\bf{Bounded Nonuniform Markus--Yamabe Conjecture (B-NUMYC)}]

%\newline
Let us consider the nonlinear system (\ref{MY}) where $f\colon \mathbb{R}^{+}\times\mathbb{R}^{n}\to \mathbb{R}^{n}$. If $f$ satisfies the conditions \textnormal{\textbf{(G1)}},\textnormal{\textbf{(G2)}} and
\begin{itemize}
\item[\textbf{(G3$^{*}$)}] For any bounded piecewise continuous function  $t\mapsto \omega(t)$, the linear system
\begin{equation*}
%\label{MYNA}
\dot{\vartheta} = Jf(t,\omega(t))\vartheta,
\end{equation*}
where $Jf(t,\cdot)$ is the jacobian matrix of $f(t,\cdot)$, 
has a nonuniform exponential dichotomy spectrum satisfying
\begin{equation*}
%\label{SMY}
\Sigma
(Jf(t,\omega(t))) \subset (-\infty,0).
\end{equation*}
\end{itemize}

Then the trivial solution of the nonlinear system \eqref{MY} is globally nonuniformly asymptotically stable.
\end{conjecture}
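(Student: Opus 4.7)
My plan is to try to establish \textbf{B-NNMYC} by mimicking the strategy successfully used for the one-dimensional case of \textbf{NNMYC} in \cite{CHR}, and extending it via a bootstrapping procedure that exploits the weaker hypothesis \textbf{(G3$^{*}$)} by focusing only on bounded trajectories. The idea is to combine the pointwise nonuniform exponential stability of the linearization along each bounded trajectory (guaranteed by \textbf{(G3$^{*}$)} together with Proposition \ref{SPEST}) with a perturbation argument built on the roughness property, in order to obtain global nonuniform asymptotic stability for the full nonlinear system (\ref{MY}).

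First, I would fix $(t_{0}, x_{0}) \in \mathbb{R}^{+} \times \mathbb{R}^{n}$ and consider the trajectory $\omega(t) := x(t, t_{0}, x_{0})$, which is defined on $[t_{0}, +\infty)$ by \textbf{(G1)}. The key reduction would be an \emph{a priori boundedness} step: every forward solution remains bounded. Once this is in place, \textbf{(G3$^{*}$)} applies with $\omega$ as the chosen bounded piecewise continuous function, so the variational equation $\dot{\vartheta} = Jf(t,\omega(t))\vartheta$ has nonuniform spectrum contained in $(-\infty, 0)$, and hence by Proposition \ref{SPEST} it is nonuniformly exponentially stable.

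Next, I would rewrite the nonlinear equation along the trajectory as $\dot{x} = Jf(t,\omega(t))x + R(t,x)$, treating the remainder $R$ as a perturbation controlled on the tail by the decay of $\omega$ toward a compact set and by \textbf{(G2)}. Using the variation-of-constants formula together with the roughness result (Proposition \ref{robustezresultado}) and Lemma \ref{extension}, which allows one to pass from the interval $[T,+\infty)$ back to $\mathbb{R}^{+}$, I would propagate the exponential decay of the linear part to the nonlinear flow, obtaining an estimate of the form $|x(t,t_{0},x_{0})| \leq \beta(\theta(t_{0})|x_{0}|, t-t_{0})$ with $\beta \in \mathcal{KL}$ and $\theta \in \mathcal{N}$; global nonuniform asymptotic stability then follows from Proposition \ref{karaf} and Definition \ref{DNU}.

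The hard part is clearly the a priori boundedness step: hypothesis \textbf{(G3$^{*}$)} is vacuous along unbounded trajectories, and nonlinear effects can push a trajectory off to infinity even when every linearization along a bounded curve is stable---this is the very mechanism behind the Cima \emph{et al.}\ counterexample \cite{CEGMH} in the classical autonomous setting. Overcoming this obstacle for $n \geq 2$ would likely require either structural restrictions on $f$ (as in the quasilinear and upper triangular cases already handled in \cite{CHR}), or a fundamentally new global argument, such as a time-dependent Lyapunov function whose existence is encoded directly in the dichotomy spectrum. This is precisely why the statement remains a conjecture rather than a theorem, and a general proof is beyond the reach of the techniques currently available.
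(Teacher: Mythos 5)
The statement you set out to prove is presented in the paper as a \emph{conjecture}, and the paper contains no proof of it: the authors only verify it in particular instances (dimension one, quasilinear and upper triangular fields in \cite{CHR}, and the explicit cubic example of Section 4, where stability is checked by direct integration of the system). Your proposal is therefore not competing with any argument in the paper; it is a strategy sketch, and you correctly concede at the end that it does not close. That concession is accurate, and the obstruction you name is the right one, but it is worth making it sharper. To invoke \textbf{(G3$^{*}$)} along $\omega(t)=x(t,t_{0},x_{0})$ you must already know that this particular forward orbit is bounded, which is essentially the global-attractivity conclusion you are trying to establish; no hypothesis of the conjecture supplies it, so the ``a priori boundedness step'' is not a lemma awaiting proof but the entire content of the conjecture.

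Even granting boundedness, the second half of your plan has an independent gap. Proposition \ref{robustezresultado} requires the perturbation to obey a pointwise bound $\|B(t)\|\leq \delta e^{-\varepsilon t}$ with $\delta<\alpha/K$, and the nonlinear remainder $R(t,x)=f(t,x)-Jf(t,\omega(t))x$ along a trajectory need not satisfy anything of the sort without structural assumptions on $f$; this is precisely the role of the extra hypotheses \textbf{(iii)}--\textbf{(iv)} in Theorem \ref{FT}, where the authors do run the roughness argument via Proposition \ref{robustezresultado} and Lemma \ref{extension}, but only to verify the \emph{hypotheses} \textbf{(G1)}--\textbf{(G3$^{*}$)} for the specific family (\ref{M_t}), not to derive the conclusion of \textbf{B-NNMYC}. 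So your outline reproduces, in spirit, the verification scheme the paper applies to special cases, but it cannot be promoted to a proof of the general statement, and the paper does not claim otherwise.
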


It is important to emphasize that the restriction of (\ref{MY}) to the autonomous case is not equivalent to \textbf{MYC}, this would be the case only if \textbf{(G3)} considers constant functions instead of bounded piecewise continuous ones. On the other hand \textbf{MYC} and \textbf{NUMYC} are formulated in terms of spectra which are not coincident. Some examples of systems $\dot{x}=A(t)x$ verifying $\Sigma(A)\subset (-\infty,0)$ and having eigenvalues with positive real part are shown in \cite[p.158]{Ilchmann}.

\begin{remark}
The assumptions \textnormal{\textbf{(G2)}} and \textnormal{\textbf{(G3$^{*}$)}} have subtle differences with those stated on \cite{CRMY}, where it was assumed that $x=0$ is the unique equilibrium of \eqref{MY}
and $t\mapsto \omega(t)$ was considered only a measurable function. A careful reading of the next section and the next result will show us that we can weaken our assumption about equilibrium while demanding stronger conditions for $t\mapsto \omega(t)$.
\end{remark}

\begin{remark}
The assumption \textnormal{\textbf{(G3$^{*}$)}} has some reminiscences to the concept of Bounded Hurwitz vector fields which was established in \cite{GC-Bounded} in order to give examples of this kind of vector field that satisfies the hypothesis of \textnormal{\textbf{MYC}} without the origin to be global attractor due to the existence of a periodic orbit.
\end{remark}

\medskip
\section{Nonautonomous notions of injectivity and Markus--Yamabe Conjecture}
In this section we intend to explore the consequences of the \textbf{B-NUMYC} on the injectivity properties of the family of maps $x\mapsto F_{t}(x):=f(t,x)$ associated to (\ref{MY}). Our interest is motivated by the Fournier and Martelli result, which stated that if \textbf{MYC} would be true for certain autonomous differential systems, then the Jacobian Conjecture is true.

Given that injectivity is a property specific to a single map, we will tackle the issue of injectiveness for a family of maps parametrized by $t$. To this end, we propose the following definitions:

\begin{definition}
\label{injectivity}
A family of maps $F_t: \mathbb{K}^n \to \mathbb{K}^n $ is:
\begin{itemize}
    \item[i)] Partially injective if
    $$
    (\forall \tau\geq0)\, (\exists\; t\geq\tau),\; \{(\forall x,y\in \mathbb{K}^{n}),\;[(F_t(x) = F_t(y)) \Rightarrow (x=y)]\},
    $$
   \item[ii)] Pseudo partially injective if
    $$
    (\forall x,y\in \mathbb{K}^{n}),\;(\forall \tau\geq0),\, (\exists\; t\geq\tau),\; [(F_t(x) = F_t(y)) \Rightarrow (x=y)],
    $$
    \item[iii)] Eventually injective if 
    $$
    (\exists \tau\geq0),\, (\forall\; t\geq\tau)\; \{(\forall x,y\in \mathbb{K}^{n}),\;[(F_t(x) = F_t(y)) \Rightarrow (x=y)]\}.
    $$
      \item[iv)] Pseudo eventually injective if 
    $$
    (\forall x,y\in \mathbb{K}^{n}),\;(\exists \tau\geq0),\, (\forall\; t\geq\tau),\; [(F_t(x) = F_t(y)) \Rightarrow (x=y)].
    $$
\end{itemize}
\end{definition}

\begin{remark}
\label{PPI}
It is important to note that:
\begin{itemize}
\item[a)] Partial injectivity implies pseudo partial injectivity, 
\item[b)] Eventual injectivity implies pseudo eventual injectivity,
\item[c)] Eventual injectivity implies partial injectivity.
\end{itemize} 
We will verify the property a) while b) and c) are left for the reader: let us assume that $F_{t}$ is partially injective but not pseudo partially injective, that is
$$
    (\exists x_{0},y_{0}\in \mathbb{K}^{n}),\;(\exists \tau^{*}\geq0),\, (\forall\; t\geq\tau^{*}),\; [(F_t(x_{0}) = F_t(y_{0})) \wedge (x_{0}\neq y_{0})],
    $$
   but this leads to a contradiction with the partial injectivity definition when considering $\tau=\tau^{*}$. 
\end{remark}

\begin{remark}\label{herencia}
If $M_t: \mathbb{R}^{2n} \to \mathbb{R}^{2n}, x \mapsto (M_1(t,x),M_2(t,x), \ldots, M_{2n-1}(t,x), M_{2n}(t,x)),$ satisfies an injectivity property of the previous definitions, it can be proved that this injectivity property is inherited by maps $N_t:\mathbb{C}^{n} \to \mathbb{C}^{n}, z \mapsto (N_1(t,z), \ldots, N_n(t,z))$  where  for $j=1, \ldots, n,$
it has that
\begin{equation}\label{MN}
\left\{\begin{array}{rcl}
      M_{2j-1}(t,x) &= & Re(N_j(t,z)) \\
     M_{2j}(t,x) &=& Im(N_j(t,z)).   
    \end{array}\right.
\end{equation}
In fact, for any of the four previous definitions, if we assume that $N(t,z)=N(t,w)$ and we consider  $$(M_1(t,x),M_3(t,x), \ldots, M_{2n-1}(t,x))=Re(N_1(t,z),N_2(t,z),\ldots,N_n(t,z))$$ 
and $$(M_2(t,x),M_4(t,x), \ldots, M_{2n}(t,x))=Im(N_1(t,z),N_2(t,z),\ldots,N_n(t,z)),$$
then we can ensure that $M_{2j}(t,x)=M_{2j}(t,y)$ with $j=1,\ldots,n$, and this implies that $x=y$ and by considering $x_{2j-1}=Re(z_j)$ and $x_{2j}=Im(z_j)$, we conclude that $z=w$.
\end{remark}

The injectivities stated in Definition \ref{injectivity} are neither artificial ones nor
a random work with quantifiers. In fact, they are notions arising from our previous work and come from the experience of dealing with multiple examples scattered on the {\bf NUMYC} and the nonautonomous versions of the Hartman--Grobman Theorem.

In order to illustrate the above defined injectivities for a family $F_{t}$ and its relations, we will consider the following examples:

\begin{example}
The family $F_{t}\colon \mathbb{R}\to \mathbb{R}$ given by
$$
F_{t}(x)=\left\{\begin{array}{rcl}
0 &\textnormal{if}& t<x, \\
tx &\textnormal{if}& t\geq x
\end{array}\right.
$$
is not partially injective but it is pseudo partially injective. In fact, given $\tau\geq 0$ and considering
any $t\geq \tau$,
we always can find
a couple $(x_{0},y_{0})$ with $x_{0}\neq y_{0}$ satisfying $t<\min\{x_{0},y_{0}\}$.
Then we have $F(t,x_{0})=F(t,y_{0})=0$, and the partial injectivity is not verified.

On the other hand, given $(x,y,\tau)\in \mathbb{R}^{2}\times \mathbb{R}^{+} $, we can always find a fixed $t\geq \max\{x,y,\tau\}$ such that
$F(t,x)=F(t,y)$ is equivalent
to $tx=ty$ and the pseudo-partial injectivity follows. \end{example}

\begin{example}
The family $F_t:\mathbb{K}^3\to\mathbb{K}^3$ given by
$$F_t(x,y,z) = \left (-x+e^{-t}(x+y)^3, -y+e^{-t}[(x+z)^3-(x+y)^3], -z-e^{-t}(x+y)^3 \right )$$ is eventually injective. In fact, we can find an explicit the inverse for this map for each $t \in \mathbb{K}.$ Namely, $F_t^{-1}(x,y,z) = (G_1, G_2, G_3)_t(x,y,z)$ where

$$
\begin{array}{rcl}
G_{1_t} & = & -x - e^{-t}((x+y)+e^{-t}(x+z)^{3})^3\\
G_{2_t} & = & -y + e^{-t}\left((x+y)+e^{-t}(x+z)^3\right)^3-e^{-t}(x+z)^3\\
G_{3_t}& = & -z+e^{-t}((x+y)+e^{-t}(x+z)^3)^3.
\end{array}
$$
\end{example}

The above example is inspired by the classification of the nilpotent maps achieved in \cite[Theorem 1]{CE}.

%\textcolor{red}{\begin{remark}
 %   It is important to note that although in this example the inverse of $F$ is calculated for all $t\geq0$ and that the article is contextualized in such a way that the time parameter belongs to $[0,+\infty)$, it is possible to calculate the inverse of $F$ for all $t\in\mathbb{R}$.
%\end{remark}}

\begin{example}
\label{EB}
 Given $\lambda_{0}$ and $a$ such that $\lambda_0 <a<0$, the family of maps $F_t: \R \to \R$ defined by $F_t(x)=[\lambda_0+at\sin(t)]x$ is partially injective due to the set
 $$\{ t \in \mathbb{R}^+ \colon \lambda_0+at\sin(t)\neq0 \}$$
 has a countable complement described by
$$\{ t \in \mathbb{R}^+ \colon \lambda_0+at\sin(t)=0 \},$$ 
which is upperly unbounded and its elements are isolated points. On the other hand, the family of maps $F_{t}$ cannot eventually be injective. In fact, given $\tau\geq 0$, we can choose
$$
t_{\tau}=\min\{t>\tau \colon  \lambda_0+at\sin(t)=0 \}
$$
and $F_{t_{\tau}}(x)=F_{t_{\tau}}(y)$ is verified for any $x,y\in \mathbb{R}^{n}$ with $x\neq y$. 
\end{example}

%\textcolor{red}{Este ejemplo no satisface la defincion pero tampoco satisface las propiedades mencionadas en el contraejemplo}

Now, we will introduce a Weak Markus--Yamabe Conjecture in a nonautonomous context, which will allow us to connect the \textbf{B-NUMYC} and the Jacobian Conjecture for a parametrized family of maps. Let us recall that the Jacobian Conjecture is stated for a single map $F$, while in our framework we revisit it
in terms of a parametrized family $F_{t}$.
\medskip

\begin{conjecture}[\textbf{Nonautonomous Weak Markus--Yamabe Conjecture}]
Let $f:\R^+ \times \R^n \to \R^n$ as in the equation \eqref{MY}, which satisfies \textnormal{\bf{(G1)}} and \textnormal{\bf{(G3$^{*}$)}}, then 
the family of maps $t\mapsto F_t(x)=f(t,x)$ is partially injective.
\end{conjecture}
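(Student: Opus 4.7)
The plan is to argue by contradiction with the B-NNMYC as a black box, using a translation trick that converts a failure of pseudo partial injectivity into an auxiliary system violating global nonuniform asymptotic stability. Suppose, for contradiction, that the family $\{F_{t}\}_{t}$ is not pseudo partially injective. Then there exist $x_{0}\neq y_{0}$ in $\mathbb{R}^{n}$ and $\tau_{0}\geq 0$ such that $f(t,x_{0})=f(t,y_{0})$ for every $t\geq \tau_{0}$. I would introduce the auxiliary vector field
\begin{equation*}
g(t,z):=f(t,z+y_{0})-f(t,y_{0}),
\end{equation*}
so that the system $\dot{z}=g(t,z)$ inherits the existence, uniqueness and unbounded forward continuability of $\dot{x}=f(t,x)$.

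The next step is to verify that $g$ fulfils the three hypotheses of the B-NNMYC. Condition (G1) is immediate, since translation in $x$ preserves continuity and $C^{1}$-regularity; condition (G2) holds by construction, as $g(t,0)\equiv 0$ (note that this is precisely the piece of information the weak conjecture drops from its assumptions on $f$, and which the translation manufactures for free). For (G3$^{*}$), the chain rule gives $Jg(t,z)=Jf(t,z+y_{0})$, and since any bounded piecewise continuous $\omega(t)$ yields a bounded piecewise continuous shift $\omega(t)+y_{0}$, the spectral assumption on $f$ transfers:
\begin{equation*}
\Sigma(Jg(t,\omega(t)))=\Sigma(Jf(t,\omega(t)+y_{0}))\subset(-\infty,0).
\end{equation*}
Granting the B-NNMYC, the origin of $\dot{z}=g(t,z)$ is globally nonuniformly asymptotically stable, and in particular every forward solution tends to $0$.

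To close, I would substitute directly: $g(t,x_{0}-y_{0})=f(t,x_{0})-f(t,y_{0})=0$ for all $t\geq \tau_{0}$, so the constant function $z(t)\equiv x_{0}-y_{0}$ solves $\dot{z}=g(t,z)$ on $[\tau_{0},+\infty)$; by the forward uniqueness ensured by (G1) it is the unique solution with initial datum $z(\tau_{0})=x_{0}-y_{0}$. Since $x_{0}-y_{0}\neq 0$, this stationary solution cannot converge to the origin, contradicting the stability conclusion just extracted from the B-NNMYC.

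The argument is conceptually light, and I do not expect a serious obstacle: the only bookkeeping is the transfer of (G3$^{*}$) under translation, which reduces to checking that a fixed shift of the test curve $\omega$ preserves boundedness and piecewise continuity. The real content of the proof, hidden in the black-box invocation of B-NNMYC, is the reduction itself: the non-injectivity data $x_{0}\neq y_{0}$ with $f(t,x_{0})=f(t,y_{0})$ is transplanted into a non-trivial stationary trajectory of a system whose origin is supposed to be globally attractive, and this incompatibility is what delivers the conjecture.
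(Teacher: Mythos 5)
Your proposal is correct and follows essentially the same route as the paper: negate pseudo partial injectivity to obtain $x_{0}\neq y_{0}$ with $f(t,x_{0})=f(t,y_{0})$ for all $t\geq\tau_{0}$, translate to manufacture the equilibrium at the origin, check that \textbf{(G1)}--\textbf{(G3$^{*}$)} transfer, and contradict the global nonuniform asymptotic stability granted by \textbf{B-NNMYC} via the nontrivial constant solution $z(t)\equiv x_{0}-y_{0}$. The only cosmetic difference is that you translate by $y_{0}$ where the paper translates by $x$, and you spell out the chain-rule transfer of \textbf{(G3$^{*}$)} that the paper leaves implicit.
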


%\begin{conjecture}[\textbf{Nonautonomous Weak Markus--Yamabe Conjecture}]
%Let $f:\R^+ \times \R^n \to \R^n$ as in the equation (\ref{MY}), which satisfies \textnormal{\bf{(G1)}} and \textnormal{\bf{(G3$^{*}$)}}, then %\textcolor{red}{there exists $\tau\geq0$ such that} 
%the family of maps $t\mapsto F_t(x)=f(t,x)$ is pseudo partially injective.
%after a certain time $\tau\geq0$.
%\end{conjecture}
\medskip

The following result relates the bounded nonuniform Markus--Yamabe Conjecture and the weak Markus--Yamabe Conjecture.

%In order to simplify our analysis, we consider the set with zero measure with empty interior.

\begin{theorem}\label{inyectividad}
If \textnormal{\textbf{B--NUMYC}} is satisfied then the Nonautonomous Weak Markus--Yamabe Conjecture is true.
\end{theorem}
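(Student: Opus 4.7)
The plan is to argue by contrapositive. Suppose the Nonautonomous Weak Markus--Yamabe Conjecture fails for some $f$ satisfying \textbf{(G1)} and \textbf{(G3$^{*}$)}. Negating the definition of pseudo partial injectivity (Definition \ref{injectivity}(ii)) yields points $x_{0}, y_{0} \in \mathbb{R}^{n}$ with $x_{0} \neq y_{0}$ and a threshold $\tau_{0} \geq 0$ such that
$$
f(t, x_{0}) = f(t, y_{0}) \quad \text{for every } t \geq \tau_{0}.
$$
The strategy is to construct from these data an auxiliary nonlinear system meeting all the hypotheses of \textbf{B-NNMYC} but admitting a non-vanishing forward solution, contradicting its conclusion.

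The natural candidate is the translated vector field
$$
g(t, z) := f(t, z + y_{0}) - f(t, y_{0}), \quad (t, z) \in \mathbb{R}^{+} \times \mathbb{R}^{n}.
$$
Property \textbf{(G1)} transfers unchanged from $f$ to $g$, and by construction $g(t, 0) = 0$ for every $t \geq 0$, so \textbf{(G2)} is met. For \textbf{(G3$^{*}$)}, one computes $Jg(t, \omega(t)) = Jf(t, \omega(t) + y_{0})$ and observes that whenever $\omega(\cdot)$ is bounded and piecewise continuous, so is $\omega(\cdot) + y_{0}$; hence the nonuniform dichotomy spectrum hypothesis for $f$ at the translated argument $\omega + y_{0}$ is exactly \textbf{(G3$^{*}$)} for $g$ at $\omega$. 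Assuming \textbf{B-NNMYC}, the trivial solution of $\dot{z} = g(t, z)$ is globally nonuniformly asymptotically stable in the sense of Definition \ref{DNU}.

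The final step is to exhibit a solution violating \textnormal{GNUAS}. Set $z_{0} := x_{0} - y_{0} \neq 0$ and consider the Cauchy problem for $\dot{z} = g(t,z)$ with initial time $\tau_{0}$ and initial state $z_{0}$. The standing identity gives $g(t, z_{0}) = f(t, x_{0}) - f(t, y_{0}) = 0$ for every $t \geq \tau_{0}$, so the constant function $z(t) \equiv z_{0}$ is a forward solution on $[\tau_{0}, +\infty)$; uniqueness (guaranteed by \textbf{(G1)}) makes it the only one, and it plainly does not tend to the origin, contradicting \textnormal{GNUAS}. The only place that requires genuine attention is the preservation of \textbf{(G3$^{*}$)} under the constant translation $z \mapsto z + y_{0}$; the remaining items, namely the negation of the quantifier string defining pseudo partial injectivity and the uniqueness argument forcing the constant solution, are bookkeeping.
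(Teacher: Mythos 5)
Your proposal is correct and follows essentially the same route as the paper: negate pseudo partial injectivity, translate the vector field so that the origin becomes an equilibrium, verify that \textbf{(G1)}, \textbf{(G2)} and \textbf{(G3$^{*}$)} survive the translation, invoke \textbf{B-NNMYC}, and contradict global nonuniform asymptotic stability with the constant solution $z(t)\equiv x_{0}-y_{0}$. The only (immaterial) difference is that you translate by $y_{0}$ where the paper translates by $x$, and you spell out the verification of \textbf{(G3$^{*}$)} under the shift, which the paper merely asserts.
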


\begin{proof}
The proof will be made by contradiction by assuming that \textbf{B--NUMYC} is true and, consequently,
the family of maps $F_t(x):=f(t,x)$ must satisfy its assumptions \textbf{(G1)},\textbf{(G2)} and \textbf{(G3$^{*}$)}, but not verify the definition of partial injectivity:
\begin{equation}
\label{contradiction}
   (\exists \tau\geq0), (\forall\; \tau_{\sigma}\geq\tau), [ (\exists x_{\tau_{\sigma}},y_{\tau_{\sigma}}\in \mathbb{R}^{n}),\;(F_{\tau_{\sigma}}(x_{\tau_{\sigma}}) = F_{\tau_{\sigma}}(y_{\tau_\sigma})) \wedge (x_{\tau_\sigma} \neq y_{\tau_{\sigma}})],
\end{equation}
which allows to construct, for each fixed $\tau_{\sigma} \geq \tau$ and its corresponding couples $(x_{\tau_{\sigma}},y_{\tau_{\sigma}})$ satisfying (\ref{contradiction}), a family of auxiliary  maps $G:\R^+ \times \R^{n}\to\R^{n}$  defined by
$$G(s,z):=G_s(z) = F_s(z+x_{\tau_{\sigma}}) - F_s(x_{\tau_{\sigma}}).$$

%$$G_t(z) = F_{t^{*}}(z + x^{*}) - F_{t^{*}}(x^{*}).$$

Notice that, for each of these maps, $G(s,0)=G_s(0) = 0 $ for any $s\geq0$ and  we can verify that each of  
theses differential systems belonging to the family 
\begin{equation}
\label{auxiliar-G}
\displaystyle \{\dot{z} = G(s,z)\}_{\tau_{\sigma}\geq \tau}
\end{equation}
satisfies \textnormal{\bf{(G1)}} and \textnormal{\bf{(G3$^{*}$)}}, thus the Bounded Nonuniform Nonautonomous Markus--Yamabe Conjecture assures that the origin is globally nonuniformly asymptotically stable
for each of the systems belonging to the family (\ref{auxiliar-G}).
On the other hand, note that for each system belonging to (\ref{auxiliar-G}) we can construct an associated  initial value problem 
$$
\left \{\begin{array}{rcl}
    \dot{z} &=& G(s,z),   \\
     z(\tau) &=& z_0
\end{array}
\right.
$$
with $z_0=y_{\tau_{\sigma}}-x_{\tau_{\sigma}},$ which  has a constant solution $z(s,\tau,z_0) = y_{\tau_{\sigma}}-x_{\tau_{\sigma}} \neq 0$  for all $s\geq \tau$, which does not converge to 0 when $s\to\infty$, therefore we obtain a contradiction. Finally, the family of maps $F_t$ is partially injective and therefore the Nonautonomous Weak Markus--Yamabe Conjecture follows.
\end{proof}

\begin{remark} 
 The proof of Theorem \ref{inyectividad} is inspired by
\cite[p.177]{vE}.  That is, in an autonomous context is proved that if a $C^1$ vector field satisfies the hypothesis of the \textnormal{\textbf{MYC}} then the vector field is injective. As stated in the introduction, \textnormal{\textbf{MYC}} is true when $n=2$, which was proved independently by C. Guti\'errez \cite{Gu}, R. Fe\ss ler \cite{F} and A. A. Glutsyuk \cite{Glu} who used the fact that hypothesis of this problem, in dimension two, is equivalent to the injectivity of the map \textnormal{(see \cite{O})}. 
\end{remark}

\begin{remark}
Note that the map $F_{t}$ studied in the Example \ref{EB} is associated to the differential equation
\begin{displaymath}
\dot{x}=[\lambda_0+at\sin(t)]x,
\end{displaymath}
which is a well known case of nonuniform asymptotic stability and satisfies the conditions of \textnormal{\textbf{B-NUMYC}}. As the parametrized family $F_{t}$ is 
not eventually injective,   this shows that the \textnormal{\textbf{B-NUMYC}} cannot implies this type of injectivity.
\end{remark}

\section{An application of \textbf{B-NUMYC} to the study of polynomial automorphisms}

This section revisits the topics seen on sections 2 and 3 focused in the following family of polynomial maps parametrized by $t$, defined as 
$
M: \mathbb{R}^{+} \times \mathbb{R}^{2n} \to \mathbb{R}^{2n}
$
with
\begin{equation}
\label{M_t}
\begin{array}{rcl}
\hspace{-0.5cm}(t,x) \mapsto M(t,x)=M_{t}(x)&=&(M_{1}(t,x), \ldots, M_{2n}(t,x)),\\
&=& (\lambda x_1 + H_{1}(t,x), \ldots, \lambda x_{2n} +  H_{2n}(t,x)),
\end{array}
\end{equation}
where $\lambda < 0$ and $x\mapsto M_{t}(x)$ is a polynomial for any fixed $t$ such that:
\begin{itemize}
    \item[\textbf{(i)}] $M$ is continuous with respect to $t.$

    \item[\textbf{(ii)}]  For all $t \geq 0$ fixed, $JH_t(x)$ is nilpotent.

    \item[\textbf{(iii)}]  For all $t \geq 0$ fixed, $(H_i)_t$ is zero or homogeneous of degree $3$ for $i=1,  \ldots, 2n.$
\end{itemize}

%states the main result of this article, but previously we have to give the environment that  

%namely, the Nonuniform Nonautonomous Markus-Yamabe Conjecture implies  \textcolor{red}{the Jacobian Conjecture is true for a partially injective family of maps.}

\medskip

Analogously, let us consider the following polynomial maps parametrized by $t$ defined by
$
N: \mathbb{R}^{+} \times \mathbb{C}^{n} \to \mathbb{C}^{n}
$
with
\begin{equation}
\label{N_t}
\begin{array}{rcl}
\hspace{-0.5cm}(t,X) \mapsto N(t,X)=N_{t}(X)&=&(N_{1}(t,X), \ldots, N_{n}(t,X)),\\
&=& (\lambda X_1 + K_{1}(t,X), \ldots, \lambda X_{n} + K_{n}(t,X)).
\end{array}
\end{equation}

Moreover, for  $j=1,\ldots,n,$ $ M_t$ and $N_t$  have the same relation established in (\ref{MN}).

\medskip

In the autonomous case, the maps $N = (N_1, \ldots, N_n) = X_i + K_i,$ where $K_i$ is a cubic homogeneous polynomial and $JK$ is nilpotent, play a key role in the study of the Jacobian Conjecture. In fact, H. Bass {\it{et al.}} \cite{BCW} and A.V. Yagzhev \cite{Y} proved that it is sufficient to show this conjecture for this kind of maps, for all dimension $n \geq 1.$ This approach was improved by M. de Bondt and A. van den Essen in \cite{BE} who show that it is sufficient to investigate the Jacobian Conjecture for all  maps of the form $(X_1+f_{X_1}, \ldots, X_n + f_{X_n})$ where $f$ is a homogeneous polynomial of degree $4$, $f_{X_i}$ denotes the partial derivatives of $f$ with respect to $X_i$ and $n \geq 1$.

\medskip

\begin{remark}\label{L4}
    An interesting property of the family of maps $x\mapsto H(t,x)$ stated in \eqref{M_t} is: for any family $x\mapsto H(t,x)$ satisfying the property \textnormal{\textbf{(iii)}} there exists a continuous function $a(t)$ and a positive constant $C$ such that the Euclidean norm of $H(t,x)$ verifies:
\begin{equation}
\label{est-cubica}
|H(t,x)|\leq Ca(t)|x|^{3}.
\end{equation}

More specifically, any nonzero homogeneous polynomial $H_{\ell}(t,x_{1},\ldots,x_{n})$ of degree 3 has the following representation:
$$
H_{\ell}(t,x_{1},\ldots,x_{n})=\sum\limits_{i=1}^{n}\alpha_{i}(t)x_{i}^{3}+\sum\limits_{i=1}^{n}\left[\sum\limits_{j=1,j\neq i}^{n}\alpha_{ij}(t)x_{i}^{2}x_{j}\right]+\sum\limits_{i\neq j \neq k} \alpha_{ijk}(t)x_{i}x_{j}x_{k},
$$
then we can see that the squares of $x_{i}^{3}$, $x_{i}^{2}x_{j}$, $x_{i}x_{j}^{2}$ and $x_{i}x_{j}x_{k}$ are present in the explicit representation of
$(x_{1}^{2}+\cdots+x_{n}^{2})^{3}$ as: $x_{i}^{6}$, $x_{i}^{4}x_{j}^{2}$, $x_{i}^{2}x_{j}^{4}$ and $x_{i}^{2}x_{j}^{2}x_{k}^{2}$. 

On the other hand, for any $\ell\in\{1,\dots,n\}$, the indeterminates of $H_{\ell}^{2}(t,x_{1},\ldots,x_{n})$ that are not present in $(x_{1}^{2}+\cdots+x_{n}^{2})^{3}$ can be upperly bounded by ones which are present by cleverly using Young's inequality. Finally, algebraic adjustments will allow us to conclude that \eqref{est-cubica} is verified.

\end{remark}

Last but not least, note that the Jacobian of any polynomial $x\mapsto M_{t}(x)$ described by (\ref{M_t}) and satisfying \textbf{(i)}--\textbf{(iii)}, has constant determinant $\lambda< 0$. This raises the following question. Given a fixed $t$, are the polynomial maps $N_{t}$ invertible with a polynomial inverse? Our next result gives a partial response, provided that the \textbf{B-NUMYC} is true.

\begin{theorem}
\label{FT}
If the following assumptions are satisfied: 
\begin{itemize}
\item[\textbf{H1)}] For all $n \geq 1$, the \textnormal{\textbf{B-NUMYC}} is true for any family $x\mapsto M_{t}(x)$ defined by \eqref{M_t} and satisfying the properties \textnormal{\textbf{(i)}-\textbf{(iii)}}.
\item[\textbf{H2)}] The continuous function $a(t)$ present in \eqref{est-cubica} is upper bounded.
\item[\textbf{H3)}] Given $\varepsilon>0$, for any
bounded piecewise continuous map $t \mapsto \omega(t)$, there exists an interval $[T_{\omega}(\varepsilon),+\infty),$ and $\delta<-\lambda$  such that:
    \begin{equation}
        \label{robustez}
    \| JH(t,\omega(t)) \| \leq \delta e^{- \varepsilon t}  \quad \textrm {for any} \quad t \geq T_{\omega}.
\end{equation}
\end{itemize}    

Then the map $x\mapsto N_t(x)$, defined by \eqref{N_t}, is partially injective and
has a polynomial inverse at each $t$ satisfying the property of partial injectiveness.
\end{theorem}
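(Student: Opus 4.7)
The plan is to verify that the system $\dot{x}=M(t,x)$ falls under the scope of \textbf{B-NNMYC} (using hypotheses \textbf{(i)}--\textbf{(iv)}), deduce pseudo partial injectivity of the family $\{M_t\}$ through Theorem~\ref{inyectividad}, and then, for a parameter $t_*$ at which $M_{t_*}$ is actually injective (supplied by the partial injectivity assumption), upgrade pointwise injectivity to the existence of a polynomial inverse via a classical algebraic--geometric argument.

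First I would check \textbf{(G1)}, \textbf{(G2)}, and \textbf{(G3$^{*}$)}. Condition \textbf{(G2)} is immediate: since each $H_i(t,\cdot)$ is zero or homogeneous of degree $3$, we have $M_t(0)=\lambda\cdot 0+H(t,0)=0$. For \textbf{(G1)}, the smoothness in $x$ and continuity in $t$ are clear from the polynomial structure of $x\mapsto M_t(x)$ and property \textbf{(i)}; the forward unbounded continuability follows from combining the cubic bound (\ref{est-cubica}), the boundedness of $a(t)$ from \textbf{(iii)}, the dissipative effect of $\lambda<0$, and a Gronwall-type estimate via the variation of constants formula. The crux is \textbf{(G3$^{*}$)}. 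For a bounded piecewise continuous $t\mapsto \omega(t)$, the linearization of $M$ along $\omega$ is
\begin{equation*}
\dot{\vartheta}=\left[\lambda I_n + JH(t,\omega(t))\right]\vartheta.
\end{equation*}
The unperturbed system $\dot{\vartheta}=\lambda I_n\vartheta$ has transition matrix $e^{\lambda(t-s)}I_n$, hence is nonuniformly exponentially stable on $\mathbb{R}^+$ with $K=1$, $\alpha=-\lambda$, $\varepsilon=0$. Hypothesis \textbf{(iv)} provides the smallness estimate $\|JH(t,\omega(t))\|\leq \delta e^{-\varepsilon t}$ on $[T_\omega,+\infty)$ with $\delta<-\lambda=\alpha/K$, so Proposition~\ref{robustezresultado} yields nonuniform exponential stability of the perturbed linearization on $[T_\omega,+\infty)$, and Lemma~\ref{extension} (applied with $P=I_n$) extends this to all of $\mathbb{R}^+$. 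Proposition~\ref{SPEST} then translates this into $\Sigma(\lambda I_n+JH(\cdot,\omega(\cdot)))\subset(-\infty,0)$, which is exactly \textbf{(G3$^{*}$)}.

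With the hypotheses verified, the assumed truth of \textbf{B-NNMYC} implies that the origin of $\dot{x}=M(t,x)$ is globally nonuniformly asymptotically stable, and Theorem~\ref{inyectividad} yields that $\{M_t\}$ is pseudo partially injective. The further assumption that $\{M_t\}$ is in fact \emph{partially} injective supplies, for every $\tau\geq 0$, a parameter $t_*\geq\tau$ for which $M_{t_*}\colon\mathbb{C}^n\to\mathbb{C}^n$ is injective. Because $JH(t_*,x)$ is nilpotent, every eigenvalue of $JM_{t_*}(x)=\lambda I_n+JH(t_*,x)$ equals $\lambda$, hence $\det JM_{t_*}(x)=\lambda^n\neq 0$ is a nonzero constant, and $M_{t_*}$ is therefore an injective Keller map. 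The Ax--Grothendieck theorem gives surjectivity, hence bijectivity, and the nowhere vanishing Jacobian makes $M_{t_*}$ étale; a bijective étale self-morphism of $\mathbb{C}^n$ is a polynomial automorphism, which produces the required polynomial inverse.

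The main obstacle lies in the chain of reductions behind \textbf{(G3$^{*}$)}: one must carefully align the dichotomy constants $(K,\alpha,\varepsilon)$ of the unperturbed system $\dot{\vartheta}=\lambda I_n\vartheta$ with the smallness parameters of hypothesis \textbf{(iv)} so that Proposition~\ref{robustezresultado} applies uniformly across bounded $\omega$, and then correctly pass from stability on $[T_\omega,+\infty)$ to the full spectral condition on $\mathbb{R}^+$ via Lemma~\ref{extension}. A secondary difficulty is the forward continuability part of \textbf{(G1)} in the presence of the cubic nonlinearity, where the linear dissipation $\lambda<0$ and the boundedness of $a(t)$ must be exploited to rule out finite-time blow-up.
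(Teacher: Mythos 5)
Your proposal follows the paper's proof almost step for step: verify \textbf{(G1)}--\textbf{(G3$^{*}$)} for $\dot x=M(t,x)$, with \textbf{(G3$^{*}$)} obtained exactly as in the paper by viewing the linearization along $\omega$ as a perturbation of $\dot\vartheta=\diag\{\lambda\}\vartheta$, applying the roughness result (Proposition \ref{robustezresultado}) on $[T_\omega,+\infty)$, extending to $\R^{+}$ via Lemma \ref{extension}, and invoking Proposition \ref{SPEST}; then apply \textbf{B-NNMYC} together with Theorem \ref{inyectividad} to obtain pseudo partial injectivity and specialize to the parameters $t_*$ at which $M_{t_*}$ is injective. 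The one genuine divergence is the final algebraic step: the paper cites the Cynk--Rusek theorem \cite{Cynk} (an injective polynomial endomorphism of $\mathbb{C}^{n}$ is an automorphism), whereas you rederive the needed special case from the nilpotency of $JH$ (so $\det JM_{t_*}\equiv\lambda^{n}\neq 0$), Ax--Grothendieck for surjectivity, and the fact that a bijective \'etale endomorphism of $\mathbb{C}^{n}$ is an isomorphism. Both routes are valid; yours makes explicit use of the constant Jacobian, which Cynk--Rusek does not require, and is in effect a self-contained proof of the instance of that theorem used here.

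Two points deserve attention. First, \textbf{B-NNMYC} is formulated for real systems while $M_t$ acts on $\mathbb{C}^{n}$; the paper therefore works with the realification $\overline M\colon\R^{+}\times\R^{2n}\to\R^{2n}$ built from real and imaginary parts (this is precisely why the hypothesis ``for all $n\geq 1$'' is needed), and your write-up of the linearization as $\lambda I_{n}+JH(t,\omega(t))$ elides this passage; it is routine but must be stated. Second, your justification of forward completeness in \textbf{(G1)} by ``a Gronwall-type estimate'' is the weakest link: the bound (\ref{est-cubica}) together with \textbf{(iii)} only yields the differential inequality $\frac{d}{dt}\|x\|\leq\lambda\|x\|+C\|a\|_{\infty}\|x\|^{3}$, and the associated comparison equation (\ref{auxiliaire}) blows up in finite time whenever $v(t_{0})>\sqrt{-\lambda/(C\|a\|_{\infty})}$, so no Gronwall- or Bihari-type argument based on this estimate alone can exclude finite-time blow-up for large initial data. (The paper's own Step 1 asserts global existence for (\ref{auxiliaire}) and thus shares this difficulty.) Closing this step honestly requires additional structure of the vector field, for instance the nilpotency of $JH$, which neither your sketch nor the paper brings to bear at this point.
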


\begin{proof}
The result follows if we prove that the
 nonautonomous vector field defined by (\ref{M_t}), namely,  $M(t,x): \R^{+} \times \R^{2n} \to \R^{2n}$ 
satisfies the Bounded Nonuniform Nonautonomous Markus--Yamabe Conjecture. This proof will be made in several steps. 

\noindent \textit{Step 1:} The
 nonautonomous vector field  $M(t,x)$ verifies \textbf{(G1)}--\textbf{(G2)}. As we know that 
 $H(t,x)$ is continuous, to prove \textbf{(G1)} we only need to verify that the solutions of 
\begin{equation}
\label{ODE-Pol}
\dot{x}=\lambda x+H(t,x)
\end{equation}
are defined in $[t_{0},+\infty)$ for any $t_{0}\geq 0$. This proof will be made by contradiction by supposing that there exists a forward solution $t\mapsto x(t)$ of (\ref{ODE-Pol}) passing by $x_{0}$ at $t=t_{0}$ having a bounded maximal domain $(t_{0},T)$, which implies that $\lim\limits_{t\to T^{-}}|x(t)|=+\infty$.

Now, the scalar product of (\ref{ODE-Pol}) with $x\neq 0$ followed by its division by its euclidean norm $|x|$, give us
\begin{displaymath}
\frac{d}{dt}|x|=\lambda |x|+\frac{\langle H(t,x),x\rangle}{|x|}.
\end{displaymath}

By the Cauchy--Schwarz inequality combined with Remark \ref{L4}
and \textbf{(H2)} we can deduce that any nontrivial solution of (\ref{ODE-Pol}) has a euclidean norm satisfying the scalar differential inequality
\begin{displaymath}
\frac{d}{dt}|x|\leq \lambda |x|+C\,a(t)|x|^{3}.
\end{displaymath}

By using a technical result (see for example \cite[Th.4.1, Ch.4]{Graef}) for scalar differential inequalities,
we can compare the solutions of the above inequality with the solutions of
\begin{equation}
\label{auxiliaire}
\dot{v}=\lambda v+C|a|_{\infty}v^{3} \quad \textnormal{with $v(t_{0})=|x_{0}|$}
\end{equation}
and to deduce that $|x(t)|\leq v(t)$ for any $t\in (t_{0},T)\cap I$, where $I$ is the domain of the solution of (\ref{auxiliaire}).

It is straightforward to verify that the solution of (\ref{auxiliaire}) is defined on $[t_{0},+\infty)$. Finally, as $|x(t)|\leq v(t)$ for any $t\in (t_{0},T)$ and $v$ is upper bounded on $(t_{0},T]$, we obtain a contradiction and \textbf{(G1)} is verified.

The condition  \textbf{(G2)} is verified due to the fact that $H$ is an homogeneous map and it is continuous with respect to $t.$

\noindent \textit{Step 2:} The
 nonautonomous vector field  $M(t,x)$ verifies \textbf{(G3$^{*}$)}. In fact, let us consider any bounded piecewise continuous function $t \mapsto \omega(t)$ and the $2n-$dimensional linear system
\begin{equation}
\label{familia lineal}
    \dot{\vartheta} = \left (\diag\{\lambda\} + J H(t, \omega(t)) \right) \vartheta.
\end{equation}

It is clear that the system $\dot{\theta} = \diag\{\lambda\}\theta$ has nonuniform exponential dichotomy on any unbounded interval $J\subseteq\mathbb{R}^+$ with projector $P(t) = I_{2n}$ for any $t \in J$. In addition, by using Proposition \ref{robustezresultado} combined with the property \textbf{(H3)}, it 
follows from (\ref{robustez}) that the system (\ref{familia lineal}) has a nonuniform exponential dichotomy on $J_{\omega}=[T_{\omega},+\infty)$ with projector $P(t)=I_{2n}$, for any $t\in J_{\omega}$. 

Now, the Lemma \ref{extension} states that the system (\ref{familia lineal}) has in fact a nonuniform exponential dichotomy on $\R^{+}$ with projector $P(t)=I_{2n}$, for any $t\geq 0$, or equivalently, is nonuniformly exponentially stable in the sense of Definition \ref{NUES}, with $J=\R^{+}$. Finally, Proposition \ref{SPEST} assures that 
$$\Sigma(\diag\{\lambda\}+J H(t, \omega(t))) \subset (-\infty ,0)$$
for any $\omega(\cdot)$ and then \textbf{(G3$^{*}$)} holds.

%thus \textcolor{red}{the nonuniform spectrum verifies} $\Sigma(\diag\{\lambda\})=\{\lambda\} \subset (-\infty ,0)$ and the evolution operator $\Phi_{\omega(t)}(t,s)$   of this system has nonuniformly bounded growth.  Now, due to (\ref{robustez}) and by using Robustness result \cite[Theorem 1]{RBV}, we have that the family of linear systems (\ref{familia lineal}) has nonuniform exponential dichotomy on $\mathbb{R}^+$ with projector $P(t) = I$ for any $t \geq 0$, thus 
%$$\Sigma(\diag\{\lambda\} + J \overline{H}(t, \omega(t))) \subset (-\infty ,0)$$
%and the evolution operator $\overline{\Phi}_{\omega(t)}(t,s)$ of the systems (\ref{familia lineal}) also has nonuniformly bounded growth.

\noindent \textit{Step 3:} As \textbf{B-NUMYC} is assumed to be true for all $n\geq1$, Theorem \ref{inyectividad} says that the family of maps $M_t: \R^{2n} \to \R^{2n}$ is  partially injective and therefore, by Remark \ref{herencia} the family of maps $N_t: \mathbb{C}^n \to \mathbb{C}^n$ is also partially injective.

Finally, by applying  the result of S. Cynk and K. Rusek \cite[Theorem 2.2]{Cynk} to the maps $N_t,$ we obtain that, for any fixed $t$ which satisfies the property of partial injectivity, the map has a polynomial inverse and the result follows.

%\textcolor{red}{Creo que el resultado de Cynk es demasiado importante y merece ser comentado ya sea ac\'a o en otro lugar del texto}

\end{proof}

\subsection{Theorem \ref{FT} and the Jacobian Conjecture}
As we have said, the statement and the proof of Theorem \ref{FT} are inspired in an idea of G. Fournier and M. Martelli, who proved that if the Markus--Yamabe Conjecture would be true for autonomous systems $\dot{x}=M(x)$,
where $M$ is a polynomial vector field satisfying properties similar to \textbf{(ii)} and \textbf{(iii)} in an autonomous framework, then the Jacobian Conjecture is true. This result is deduced
by using the reduction result aforementioned. Unfortunately, the idea of Fournier and Martelli is obsolete due to the Markus--Yamabe Conjecture being proved to be false for $n\geq 3$ by A. Cima \textit{et al.} in \cite{CEGMH} in the autonomous case. 

Additionally, in a similar way to Theorem \ref{inyectividad}, the proof of above theorem follows the steps done by van den Essen in \cite{vE}, where the Cynk--Rusek's result \cite[Theorem 2.2]{Cynk} plays an important role. However, in contrast with the previous approach, the use of a nonautonomous spectral theory instead of the eigenvalue spectrum induces differences beyond formal and requires the mastery of several tools that can be pigeonholed in the nonautonomous linear algebra (see \cite[p.423]{Potzsche}).

%From now on we consider the following maps:
% $M: \mathbb{R} \times \mathbb{C}^n \to \mathbb{C}^n
%$
%defined by $$(t,x) \mapsto (M_1(t,x), \ldots, M_{n}(t,x)) = (-x_1 + H_1(t,x), \ldots, -x_n + H_n(t,x)),$$ where

%\begin{itemize}
%    \item[(i)] $M$ is continuous with respect to $t,$
%    \item[(ii)] for all $t \geq 0$ fixed, $JH_t(x)$ is nilpotent and $(H_i)_t$ is zero or homogeneous of degree $3$ for $i=1,  \ldots, n,$
%    \item[(iii)] for any bounded measurable map $t \mapsto y(t)$  
%    \begin{equation}
%        \label{robustez}
%    \| JH(t,y(t)) \| \leq \delta e^{- \varepsilon t}  \quad \textrm {for any} \quad t \geq 0.
%    \end{equation}
    
%\end{itemize}

\subsection{An illustrative example}
The following example describes a nonautonomous real polynomial map $M(t,\cdot)$ satisfying the conditions  {\bf{(i)--(iii)}} from Section 4 and, by following the lines of the proof of Theorem \ref{FT}, it also verifies {\bf{(G1)--(G3$^{*}$)}}. Moreover, we can construct the corresponding complex map $u\mapsto N_t(u)$ arising from Remark \ref{herencia} and, for any $t\geq 0$ such that $u\mapsto N_{t}(u)$ is partially injective, it has polynomial inverse; in particular, we can find explicitly its inverse.

Let us consider the nonautonomous system (\ref{M_t}), where the map $$M(t,x,y,z, w)=(M_1,M_2,M_3,M_4)(t,x,y,z,w)$$ is given by
\begin{equation}
\label{ejemploM}
\begin{array}{rcl}
M_1&=& \lambda  x -e^{-t}\left[3(w+y)^2(x+z)+(x+z)^3\right]\\ 
M_2&=& \lambda y-e^{-t}\left[3(x+z)^2(w+y)+(w+y)^3\right]\\
M_3 & = &\lambda z+e^{-t}\left[3(w+y)^2(x+z)+(x+z)^3\right]\\
M_4 &  = & \lambda w+e^{-t}\left[3(x+z)^2(w+y)+(w+y)^3\right]
\end{array}
\end{equation}

It is easy to see that $M$ satisfies {\bf{(i)--(iii)}}. In fact, note that $u\mapsto H_i(t,u)$ is homogeneous of degree $3$ for $i=1, 2, 3,4$. In addition, we have
% $$\begin{array}{ll}
% &JH(t,x,y,z,w)=\\\\
% &3 e^{-t}\left( 
% \begin{array}{cccc}
% (w+y)^2-(x+z)^2 &2(w+y)(x+z) & (w+y)^2-(x+z)^2 &2(w+y)(x+z)  \\
% -2(x+z)(w+y) & -\left[(x+z)^2-(w+y)^2\right]& -2(x+z)(w+y) &-\left[(x+z)^2-(w+y)^2\right] \\
% -\left[(w+y)^2-(x+z)^2\right] & -2(w+y)(x+z) & -\left[(w+y)^2-(x+z)^2\right] & -2(w+y)(x+z) \\
% 2(x+z)(w+y) & \left[(x+z)^2-(w+y)^2\right]& 2(x+z)(w+y) &\left[(x+z)^2-(w+y)^2\right]  
% \end{array}
% \right ),
% \end{array}
% $$

% $$\begin{array}{ll}
% &JH(t,x,y,z,w)=\\\\
% &3 e^{-t}\left( 
% \begin{array}{cccc}
% -\left[(w+y)^2+(x+z)^2\right] &-2(w+y)(x+z) & -\left[(w+y)^2+(x+z)^2\right] &-2(w+y)(x+z)  \\
% -2(x+z)(w+y) & -\left[(x+z)^2+(w+y)^2\right]& -2(x+z)(w+y) &-\left[(x+z)^2+(w+y)^2\right] \\
% (w+y)^2+(x+z)^2 & 2(w+y)(x+z) &(w+y)^2+(x+z)^2 & 2(w+y)(x+z) \\
% 2(x+z)(w+y) & (x+z)^2+(w+y)^2& 2(x+z)(w+y) &(x+z)^2+(w+y)^2
% \end{array}
% \right ),
% \end{array}
% $$

\begin{equation}
\label{JH}
JH(t,x,y,z,w)=
3 e^{-t}\left( 
\begin{array}{cccc}
-a(\cdot) &-b(\cdot) & -a(\cdot) &-b(\cdot)  \\
-b(\cdot) & -a(\cdot)& -b(\cdot) &-a(\cdot) \\
a(\cdot) & b(\cdot) & a(\cdot) & b(\cdot) \\
b(\cdot) & a(\cdot)& b(\cdot) &a(\cdot)  
\end{array}\right ),
\end{equation}
where the polynomials $a(\cdot)$ and $b(\cdot)$ are defined as follows:
$$a(x,y,z,w)=(w+y)^2+(x+z)^2 \quad \textnormal{and}\quad b(x,y,z,w)=2(w+y)(x+z).$$
% \textcolor{red}{\begin{equation}
% \label{JH}
% JH(t,x,y,z,w)=
% 3 e^{-t}\left( 
% \begin{array}{cccc}
% a(x,y,z,w) &b(x,y,z,w) & a(x,y,z,w) &b(x,y,z,w)  \\
% -b(x,y,z,w) & a(x,y,z,w)& -b(x,y,z,w) &a(x,y,z,w) \\
% -a(x,y,z,w) & -b(x,y,z,w) & -a(x,y,z,w) & -b(x,y,z,w) \\
% b(x,y,z,w) & -a(x,y,z,w)& b(x,y,z,w) &-a(x,y,z,w)  
% \end{array}\right ),
% \end{equation}
% where $a(x,y,z,w)=(w+y)^2-(x+z)^2$ and $b(x,y,z,w)=2(w+y)(x+z)$}
Then, it is easy to verify that it is a nilpotent matrix for any fixed $t \geq 0$. The property \textbf{(H2)} is verified since $e^{-t}\leq 1$ for any $t\geq 0$. In order to 
verify the property \textnormal{\textbf{(H3)}}, we note that for any bounded piecewise continuous map $t \mapsto \omega(t) = (\omega_1(t), \omega_2(t), \omega_3(t),\omega_4(t))$ and by considering (\ref{JH}), we have that
$$JM(t,\omega(t)) = \diag\{\lambda\} + JH(t,x,y,z,w)
$$
thus we have
$$\|J H(t,\omega(t))\| = \sqrt{36}e^{-t}\max\left\{|a(\omega(t))+b(\omega(t))|,|a(\omega(t))-b(\omega(t))|\right\}.$$ 

% $$\|J H(t,\omega(t))\| = \sqrt{36}e^{-t}\max\left\{|(\omega_2(t)+\omega_4(t))^2+(\omega_1(t)+\omega_3(t))^2+2(\omega_1(t+\omega_3(t)))(\omega_2(t)+\omega_4(t))|, |(\omega_2(t)+\omega_4(t))^2+(\omega_1(t)+\omega_3(t))^2-2(\omega_1(t)+\omega_3(t))(\omega_2(t)+\omega_4(t))|\right\}.$$ 

Now, as $t\mapsto \omega(t)$ is bounded and piecewise continuous, the number
$$
L_{\omega}:=\sup\limits_{t\geq 0}\max\left\{|a(\omega(t))+b(\omega(t))|, |a(\omega(t))-b(\omega(t))|\right\}
$$
% $$
% L_{\omega}:=\sup\limits_{t\geq 0}\max\left\{|(\omega_2(t)+\omega_4(t))^2+(\omega_1(t)+\omega_3(t))^2|, |2(\omega_1(t)+\omega_3(t))(\omega_2(t)+\omega_4(t))|\right\}
% $$
is well defined. If we fix $\delta<-\lambda$ and assume without loss of generality that $-\lambda <\sqrt{36}L_{\omega}$, we can
deduce that \textbf{(H3)} is verified for any $t\geq T_{\omega}=\frac{1}{{2(\varepsilon-1)}}\ln(\delta^{2}/36 L_{\omega}^{2})$.

 Therefore, the complex map $N(t,X,Y)$ associated to real map $M(t,x,y,z,w)$ is 
$$N_t(X,Y) = (\lambda X - e^{-t}(X+Y)^3,\lambda Y + e^{-t}(X+Y)^3)$$
satisfies the Jacobian Conjecture for any $t\in\mathbb{R}^{+}$ and we can find explicitly the inverse of $N_t(\cdot)$ for each $t \geq 0$. Namely, $N^{-1}_t(X,Y) = (N_1, N_2)_t(X,Y)$ where

$$
\begin{array}{rcl}
N_{1_t}(X,Y) & = & \frac{1}{\lambda} \left (X+e^{-t} \left[ \frac{1}{\lambda^3}  \left (X+Y )^3\right] \right )\right.\\\\
N_{2_t}(X,Y)& =  &  \frac{1}{\lambda}  \left (Y-e^{-t} \left[ \frac{1}{\lambda^3}  \left (X+Y )^3\right] \right )\right..\\\\
\end{array}
$$

\begin{remark}
    An interesting fact of above example is that the \textbf{B--NUMYC} can be verified explicitly for the map $M$. In fact, let $t\mapsto (x(t),y(t),z(t), w(t))$ be a solution of $\dot{z}=M(t,z)$ with initial condition $u_{0}=(x_{0},y_{0},z_{0}, w_0)$ at time $t_{0}$. Notice that
$\dot{x}(t)+\dot{z}(t)=\lambda [x(t)+z(t)]$ and consequently, if $t>t_{0}$ it follows that $$
x(t)+z(t)=e^{\lambda (t-t_{0})}(x_{0}+z_{0}),
$$
and, by using the same idea, we can state that 
$$y(t)+w(t)=e^{\lambda (t-t_{0})}(y_{0}+w_{0}).$$

Upon inserting these terms in each of the differential equations associated to the system \eqref{ejemploM}, we have that:
$$
\begin{array}{rcl}
\dot{x}(t)&=&\lambda x(t)-e^{-t}e^{3\lambda (t-t_{0})}\left[3(w_{0}+y_{0})^{2}(x_0+z_0)+(x_0+z_0)^{3}\right],\\
\dot{y}(t)&=&\lambda y(t)-e^{-t}e^{3\lambda (t-t_{0})}\left[3(x_{0}+z_{0})^{2}(w_0+y_0)+(y_0+w_0)^{3}\right],\\
\dot{z}(t)&=&\lambda z(t)+e^{-t}e^{3\lambda (t-t_{0})}\left[3(w_{0}+y_{0})^{2}(x_0+z_0)+(x_0+z_0)^{3}\right],\\
\dot{w}(t)&=&\lambda w(t)+e^{-t}e^{3\lambda (t-t_{0})}\left[3(x_{0}+z_{0})^{2}(w_0+y_0)+(y_0+w_0)^{3}\right].\\
\end{array}
$$

For the first of the previous differential equations, by considering that 
$$u_0=\max\{|3(w_0+y_0)^2(x_0+z_0)+(x_0+z_0)^3|,|3(x_0+z_0)^2(w_0+y_0)+(w_0+y_0)^3|\},$$
we obtain the estimation
$$\begin{array}{rcl}
|x(t)|&\leq&\displaystyle e^{\lambda(t-t_0)}|x_0|+\int_{t_0}^{t}e^{\lambda(t-s)}|x(s)|e^{-s}e^{3\lambda(s-t_0)}u_0\;ds\\
e^{-\lambda(t-t_0)}|x(t)|&\leq&\displaystyle |x_0|+\int_{t_0}^{t}e^{-\lambda(s-t_0)}|x(s)|e^{-s}e^{3\lambda(s-t_0)}u_0\;ds\\
% e^{-\lambda(t-t_0)}|x(t)|&\leq&\displaystyle |x_0|+\int_{t_0}^{t}e^{-\lambda(s-t_0)}|x(s)|e^{-s}e^{3\lambda(s-t_0)}\|u\|\;ds\\
% &\leq&\displaystyle |x_0|+\int_{t_0}^{t}e^{-\lambda(s-t_0)}|x(s)|\|u\|\;ds.
\end{array}$$

By Gronwall's Lemma, we obtain the following estimate:
$$\begin{array}{rcl}
e^{-\lambda(t-t_0)}|x(t)|&\leq&\displaystyle |x_0|\exp\left(\int_{t_0}^{t}e^{-s}e^{3\lambda(s-t_0)}u_0\;ds\right)\\\\
% |x(t)|&\leq&\displaystyle |x_0|e^{\int_{t_0}^{t}\|u\|}e^{\lambda(t-t_0)}
&\leq& \displaystyle |x_0|\exp\left(\int_{t_0}^{t}e^{-s}u_0\;ds\right)=\displaystyle |x_0|\exp\left(u_0[e^{-t_0}-e^{-t}]\right),
\end{array}
$$
and we conclude 
$$
|x(t)|\leq |x_0|\exp(u_0)\exp(\lambda(t-t_0)),
$$
which means that $|x(t)|\leq \beta (|x_0|)e^{\lambda(t-t_0)}$ for any $t\geq t_0$, with $\beta\in\mathcal{K}_{\infty}$. It is straightforward to replicate the above estimations for the other differential equations and to prove exactly the same result. In summary, the uniform asymptotic stability (which is a particular case of nonuniform one) is verified.
\end{remark}

%\section*{Acknowledgment}
%In the context of a postdoctoral research stay at University of Passau (Germany), Ignacio Huerta acknowledges to Professor Fabian Wirth and Professor Jeremias Epperlein for a careful reading of a previous version of this manuscript and its constructive remarks in order to improve it.


\begin{thebibliography}{99}
\bibitem{BV-CMP} L. Barreira, C. Valls.
Stability of Nonautonomous Differential Equations, Lecture Notes in Mathematics 1926, Springer, Berlin, 2008.

%\bibitem{RBV} L. Barreira, C. Valls, Robustness of nonuniform exponential dichotomies
%in Banach spaces,
%J. Differential Equations 244 (2008), 2407--2447.

\bibitem{BCW}
H. Bass, E. Connell, D. Wright,  The Jacobian Conjecture: reduction of degree and formal expansion of the inverse,
Bull. Amer. Math. Soc.  2 (1982) 287--330.

\bibitem{BE} M. de Bondt, A. van den Essen, A reduction of the Jacobian Conjecture to the symmetric case. Proc. Amer. Math. Soc. 133 (2005), 2201--2205. 

\bibitem{CRMY}  \'A. Casta\~neda, G. Robledo,
A spectral dichotomy version of the nonautonomous Markus--Yamabe conjecture, J. Differential Equations.
 268 (2020), 4544-4554.

\bibitem{CE}
\'A. Casta\~neda, A. van den Essen,
 A new class of Nilpotent Jacobians in any dimension,  J. Algebra 566 (2021), 283–301.

\bibitem{CHR}  \'A. Casta\~neda, I. Huerta, G. Robledo,
A nonuniform Markus--Yamabe conjecture: Triangular case via uniformization. 
https://arxiv.org/abs/2210.01943

\bibitem{Cheban}D. Cheban. 
Markus--Yamabe conjecture for non-autonomous dynamical systems. Nonlinear Anal. 95 (2014) 202--218.

\bibitem{CEGMH}
A. Cima, A. van den Essen, A. Gasull, E. Hubbers, F. Ma\~nosas,
A polynomial counterexample to the Markus--Yamabe conjecture. Adv. Math. 131 (1997), 453–-457.

\bibitem{Coppel} W.A. Coppel, 
Dichotomies in Stability Theory, Lecture Notes in Mathematics, Vol. 629, (1978), Springer--Verlag, Berlin.

\bibitem{Chu}  J. Chu, F. Liao, S. Siegmund, Y. Xia, W. Zhang, Nonuniform dichotomy spectrum and reducibility for nonautonomous equations, Bull. Sci. Math. 139 (2015), 538--557.

\bibitem{Cynk} S. Cynk, K. Rusek, Injective endomorphisms of algebraic and analytic sets, Annales Polonici Mathematici 56 (1991), 29--35.

%\bibitem{GMC} H. Derksen, A. van den Essen, W. Zhao, The gaussian moments conjecture and the Jacobian Conjecture, Israel J. of Math. 219 (2017) 2201--2205.

%\bibitem{Dixmier} J. Dixmier,
%Sur les algèbres de Weyl,
%Bull. Soc. Math. France 96 (1968), 209--242.

\bibitem{vE}
A. van den Essen, Polynomial Automorphisms and the Jacobian
Conjecture, Progress in Mathematics, Vol. 190, (2000), Birkh\"auser, Basel.

%\bibitem{EKC}
%A. van den Essen, S. Kuroda, A.J. Crachiola  Polynomial Automorphisms and the Jacobian
%Conjecture New Results from the Beginning of the 21st century, Frontiers in Mathematics, (2021), Birkh\"auser.

%\bibitem{Dieci} L. Dieci, E.S. Van Vleck,
%Lyapunov and Sacker--Sell spectral intervals,
%J. Dyn. Differ. Eqs., 19 (2009) 265--293.

\bibitem{F}

R. Fe{\ss}ler,
A Proof of the two dimensional Markus--Yamabe
stability conjecture. Ann. Polon. Math. 62 (1995) 45--75.



\bibitem{Glu}
 A. A. Glutsyuk, 
The complete solution of the
Jacobian problem for vector fields on the plane. Comm. Moscow Math.
Soc., Russian Math. Surveys 49 (1994) 185–-186.

\bibitem{Graef} 
J. Graef, J. Henderson, L. Kong, X. Li,
Ordinary Differential Equations and Boundary Value Problems.
Vol.I, Advanced Ordinary Differential Equations (2018) World Scientific, Singapore.

\bibitem{GC-Bounded} V. Gu\'i\~nez, \'A. Casta\~neda, A bounded Hurwitz vector field in $\mathbb{R}^4$ having a periodic orbit. Dyn. Contin. Discrete Impuls. Syst. Ser. A Math. Anal. 18 (2011), 311--317.

\bibitem{Gu}
C. Guti\'errez,
A solution to the bidimensional global asymptotic
stability conjecture. Ann. Inst. H. Poincar\'e Anal. Non Lin\'eaire
 12 (1995) 627–-671.
 
 \bibitem{Ilchmann} A. Ilchmann, D.H. Owens, D. Pratzel--W\"olters,
 Sufficient conditions for stability of linear time--varying systems.
 Syst. Control Letters 9 (1987) 157--163.

 \bibitem{GJ} 
N. Jara, C. A. Gallegos, 
\textit{Spectrum invariance dilemma for nonuniformly kinematically similar systems. }
Math. Ann. (2024). https://doi.org/10.1007/s00208-024-02969-8
 
 \bibitem{Karafyllis} I. Karafyllis, J. Tsinias,
 A converse Lyapunov theorem for nonuniform in  time global
 asymptotic stability and its application  to feedback stabilization.
 SIAM J. Control Opt., 42 (2003) 936--965.
 
 \bibitem{Keller}O. H. Keller, Ganze Cremona-Transformationen, Monatsh. Math. Phys., 47 (1939), 299--306.
 
\bibitem{Khalil} H. Khalil,
Nonlinear Systems, Prentice Hall: Upper Saddle River NJ, (1996).

\bibitem{Kloeden} P. E. Kloeden, M. Rasmussen,
Nonautonomous Dynamical Systems, American Mathematical Society (2011).


%\bibitem{Kloeden2} P.E. Kloeden, M. Yang ,
%An Introduction to Nonautonomous Dynamical Systems and their Attractors,
%World Scientific, Singapore (2021)

\bibitem{LOO}
J. A. Langa, R. Obaya, A. N. Oliveira-Sousa,
\textit{New notion of nonuniform exponential dichotomy with applications to the theory of pullback and forward attractors.}
Nonlinearity 37 (2024), no. 10, Paper No. 105009, 41 pp.

\bibitem{Llibre1} J. Llibre, X. Zhang, The Markus--Yamabe conjecture for continuous and discontinuous piecewise linear differential systems. Proc. Amer. Math. Soc. 149 (2021), 5267--5274. 

\bibitem{Llibre2} J. Llibre and L. de A.S. Menezes.  The Markus--Yamabe conjecture does not hold for discontinuous piecewise linear differential systems separated by one straight line. J. Dynam. Differential Equations 33 (2021), 659–676.


\bibitem{MY}
L. Markus,  H. Yamabe, Global stability criteria for
differential systems. Osaka Math. J. 12 (1960) 305–-317.

\bibitem{Martelli}
M. Martelli,
Global stability of stationary states of discrete dynamical systems.
Ann. Sci. Math. Qu\'ebec 22 (1998) 201--212.

\bibitem{O}
C. Olech, On the global stability of an autonomous system on the
plane. Contributions to Diff. Eq. 1 (1963) 389--400.

\bibitem{Potzsche} C. P\"{o}tzsche. Dichotomy spectra of triangular equations, Discrete Contin. Dyn. Syst. 36 (2016), 423–-450.

\bibitem{Rodrigues} H.M. Rodrigues, M.A. Teixeira. M. Gameiro. On exponential decay and the Markus--Yamabe conjecture in infinite dimensions with applications to the Cima system.
J. Dynam. Differential Equations 30 (2018), 1199–1219. 

\bibitem{SS} R.J. Sacker, G.R. Sell, A spectral theory for linear differential systems.
J. Differential Equations 27 (1978), 320–358.

\bibitem{Siegmund2002}
S. Siegmund, Dichotomy spectrum for nonautonomous differential equations, J. Dynam. Differential Equations 14 (2002), 243--258.

%\bibitem{Tsu}
%Y. Tsuchimoto,
%Endomorphisms of Weyl algebra and p-curvatures,
%Osaka J. Math. 42 (2005), 435–452.

\bibitem{Y}
A.V. Yagzhev,  On Keller's problem, Sib. Math. J. 21 (1980) 747--754.

\bibitem{Zhang} X. Zhang,
Nonuniform dichotomy spectrum and normal forms for nonautonomous differential systems, J. Funct. Anal. 267 (2014), 1889--1916.

\bibitem{ZhangYang} Y. Zhang, X--S. Yang. Dynamics analysis of Llibre-Menezes piecewise linear systems.
Qual. Theory Dyn. Syst. 21 (2022), Paper No. 11, 25 pp. 

%\bibitem{VC} W. Zhao, Hessian nilpotent polynomials and the Jacobian Conjecture, Trans. Am. Math. Soc. 359 (2007), 249--274.

%\bibitem{SIC} W. Zhao, Images of commuting differential operators of order one with constant leading coefficients, J. Algebra 324 (2010), 231--247. 

\end{thebibliography}
\end{document}